\documentclass[preprint,12pt]{elsarticle}



            
\usepackage[english]{babel}                                                                                                      
\selectlanguage{english}                                                                                                         
\usepackage[utf8]{inputenc}                                                                                                                                 \usepackage[all]{xypic}
\usepackage{amsmath,amsthm,amssymb,amsbsy,amsfonts,amstext,eufrak,latexsym, verbatim}

\newtheorem{thm}{Theorem}[section]                                                                                    
\newtheorem{lem}[thm]{Lemma}                                                                              
                                                                                               
\newtheorem{defn}{Definition}[section]                                                                                            
\newtheorem{cor}{Corollary}[section]                                                                                            
\theoremstyle{remark}                                                                                                            
                                                                                                
\theoremstyle{proposition}                                                                                                        
\newtheorem{proposition}{Proposition}[section]


\journal{Indagationes Mathematicae}

\begin{document}

\begin{frontmatter}



\title{Some Contributions on $P_F$-frames}

\author[1]{Mack Matlabyana\corref{cor1}%
}
\ead{marck.matlabyana@ul.ac.za}
\author[1]{Thabo Ngoako}
\ead{thabo.ngoako@ul.ac.za}
\author[1]{Hlengani Siweya}
\ead{hlengani.siweya@ul.ac.za}
\cortext[cor1]{Corresponding author}


\affiliation[1]{organization={Department of Mathematics and Applied Mathematics, University of Limpopo},
            addressline={Private Bag X1106}, 
            city={Sovenga},
            postcode={0727}, 
            state={Limpopo},
            country={South Africa}}

\begin{abstract}
The concept of $P_F$-frames was introduced by Ngoako \cite{lde} as a point-free extension of $P_F$-spaces. We observe that the open cozero quotient of a $P_F$-frame is itself a $P_F$-frame. The class of $P_F$-frames contains the class of $P$-frames and is, in turn, contained in the class of $F$-frames. We show that a frame $L$ is a $P_F$-frame if and only if $\beta L$ is a $P_F$-frame. Moreover, $P_F$-frames are precisely those essential $P$-frames that are also $F$-frames. Lastly, we provide a characterization of $P_F$-frames via $z$-, $d$-, and $z_r$-ideals.
\end{abstract}



\begin{keyword}
Frame \sep $P_F$-frame \sep $P$-frame \sep basically disconnected frame \sep cozero complemented frame \sep essential $P$-frame \sep essential almost $P$-frame \sep $F$-frame.

\MSC[2020] Primary : 06D22 \sep 18F70; Seconday: 06B10 \sep 06A11.
\end{keyword}

\end{frontmatter}



\section{Introduction}
Throughout this paper, all spaces are assumed to be completely regular Hausdorff (Tychnorff), and the term ``ring '' means a commutative ring with identity. First, recall that whenever two zero sets whose union is all of a topological space $X$, at least one of them is open, we call it a $P_F$-space. $P_F$-spaces were introduced by Azarpanah \emph{et al} \cite{81} as one of the generalizations of $P$-spaces. These spaces lie between $P$-spaces and $F$-spaces. Following their definition in spaces we study $P_F$-frames which turns to be a spatial frame that means the conditions of the structure of $P_F$-spaces to $P_F$-frames is preserved. $P_F$-frames were first studied by Ngoako \cite{lde} in his Master's dissertation. As indicated in the abstract the class of $P_F$-frames contain the class of $P$-frames and is in turn contained in the class of $F$-frames. The class of $P_F$-frames is not contained in the class of basically disconnected frames and neither does it contain the class of basically disconnected frames. In this paper we extend the result of \cite{81} to point-free setting.

\vspace*{0.5cm}

The paper is organized as follows. In section 2, we provide foundational concepts (properties), definitions and notations needed for this study. In section 3, we study $P_F$-frames and some examples. In section 4, we show that $P_F$-frames are preserved by dense $coz$-onto frame homomorphism (Proposition 4.1) and reflected by nearly open $coz$-codense frame homomorphisms (Proposition 4.2). In section 5, we characterize $P_F$-frames in terms of $P$-ideals (Proposition 5.3), in terms of $z$-ideals (Proposition 5.8), in terms of $d$-ideals (Corollary 5.5) and in terms of $z_r$-ideals (Corollary 5.6).

\section{Preliminaries}
For a general theory of frames we refer to \cite{23,21,22}, and for more algebraic treatment of the subject our standard reference, notations and terms on $C(X)$ (denotes the ring of real valued continuous functions on a topological space $X$) (see \cite{13}). Good references about $\mathcal{R}L$ (denotes the ring of real-valued continuous functions on a frame $L$) are \cite{15,dfg}. We use the notation of Banaschewski \cite{dfg}. Here we collect a few facts that
will be relevant for our discussion. Recall that a \emph{frame} is a complete lattice $L$ in which the infinite distributive law
\[
a\wedge \bigvee S=\bigvee \{a\wedge s\mid s\in S\}
\] holds for all $a\in L$ and $S\subseteq L$. We denote the top element and the bottom element of $L$ by $1_L$ and $0_L$, respectively, dropping the decorations if $L$ is clear from the context. An element $a$ is \emph{rather below} an element $b$ (written $a\prec b$) if there is an element $t\in L$ such that $a\wedge t=0$ and $t\vee b=1$. We call such an element $t$ a \textit{separating} element. On the other hand, an element $a$ is \emph{completely below} an element $b$ (written $a\prec\!\prec b$) if there is a sequence $c_q$ indexed by rationals $\mathbb{Q}\cap [0,1]$ such that $c_0=a,\; c_1=b$ and $c_r\prec c_s$ whenever $r<s$. A frame $L$ is said to be \emph{regular} (resp. \emph{completely regular}) if for every $a\in L$, $a=\bigvee \{x\in L\mid x\prec a\}$ (resp. $a=\bigvee \{x\in L\mid x\prec\!\prec a\}$). Throughout our frames are assumed to be completely regular. A frame $L$ is called \textit{normal} if, given $a, b\in L$ with $a\vee b=1$, we can find $c,d\in L$ such that $c\wedge d=0$ and $a\vee c=1=b\vee d$. A map between frames which preserves finite meets, including the top element, and arbitrary joins, including the bottom
element is called a \emph{frame homomorphism}. It is \textit{dense} in case it maps only the bottom element to the bottom element. It is \textit{codense} if it maps only the top element to the top element. By a \textit{quotient} of a frame $L$, we mean an onto homomorphic image of $L$. That is $M$ is a quotient of $L$ precisely if there is an onto frame homomorphism $\pi:L\rightarrow M$. In such case $\pi$ is called a \textit{quotient map}. We call $D\subseteq L$ a \textit{downset} if $x\in D$ and $y\leq x$ implies $y\in D$. For any $a\in L$, we write
$\downarrow\!a=\{x\in L\mid x\leq a\}$, that is a downset. We note that $\downarrow\!a$ is a frame whose bottom element is $0\in L$ and top element $a$. This frame is in fact the quotient of $L$ via the map $L\rightarrow \downarrow\!a$, given by $x\mapsto a\wedge x$ and it is known as \textit{open quotient}. We call $U\subseteq L$ an \textit{upset} if $x\in U$ and $y\geq x$ implies that $y\in U$.  An ideal $J$ of a frame $L$ is said to be completely regular if for each $x\in J$ there exists $y\in J$ such that $x\prec\!\prec y$. For a completely regular $L$, the frame of its completely regular ideals is denoted by $\beta L$. The join map $\beta{L}\rightarrow L$ is dense onto and referred to as the Stone-$\check{C}$ech compactification of $L$. We write $r_L$ for the \textit{right adjoint} of the join map $\vartheta:\beta L\rightarrow L$ given by
\[
r_L(x)=\{a\in L\mid a\prec\!\prec x\}.
\]
Recall that $r_L$ preserves $\prec\!\prec$, and $\bigvee r_L(x)=x$ for any $x\in L$. Regarding the \textit{frame of reals} $\mathcal{L}(\mathbb{R})$ and the $f$-ring $\mathcal{R}L$, we have the \textit{cozero map} $coz:\mathcal{R}L\rightarrow L$, given by
$$coz\varphi=\bigvee\{\varphi(p,0)\vee \varphi(0,q)\mid p,q\in \mathbb{Q}\}.$$
The properties of the cozero map that we shall frequently use are:
\begin{enumerate}
\item $coz\gamma\delta=coz\gamma \wedge coz\delta$,
\item $coz(r+\delta)\leq coz\gamma\vee coz\delta$,
\item $\varphi\in\mathcal{R}L$ is invertible if and only if $coz\varphi=1$,
\item $coz\varphi=0$ if and only if $\varphi=0$, and
\item $coz(\gamma^{2}+\delta^{2})=coz\gamma\vee coz\delta$.
\end{enumerate}
A \textit{cozero element} of $L$ is an element of the form $coz\varphi$ for some $\varphi\in\mathcal{R}L$. It is shown in \cite{lm} that $a\in L$ is a cozero element if and only if there exists a sequence $(a_n)$ such that $a_n\prec\!\prec a$ for each $n$ and $a=\bigvee a_n$. The \textit{cozero part} of $L$, denoted by $CozL$, is the regular sub-$\sigma$-frame consisting of all the cozero elements of $L$, and the $CozL$ is normal. We refer to \cite{lm} for general properties of cozero elements and cozero parts of frames. We highlight the following:
 \begin{enumerate}
 \item If $a\prec\!\prec b$, there is a cozero element $s$ such that $a\prec\!\prec s\prec\!\prec b$.
 \item If $a\prec\!\prec b$, there is a cozero element $s$ such that $a\wedge s=0$ and $s\vee b=1$.
 \end{enumerate}The \emph{pseudocomplement} of an element $a$ is an element $$a^*=\bigvee \{x\in L\mid x\wedge a=0\}.$$ However $a\vee a^{*}=1$ does not hold in general. In the case where $a\vee a^{*}=1$, we say $a$ is \textit{complemented}. An element $a\in L$ is \textit{dense} if $a^*=0$, and \textit{regular} if $a=a^{**}$ for $a\in L$. The map $r_L$ preserves pseudocomplements, as a result of which the symbol $r_L(a)^{*}$ is not ambiquous. A frame homomorphism $\pi: L\rightarrow M$ is \textit{coz-onto} if for every $c\in CozM$ there exists $d\in CozL$ such that $\pi(d)=c$. It is \textit{coz-codense} if the only cozero element it maps to the top element is the top  element, and \textit{nearly open} if $\pi(c^{*})=\pi(c)^{*}$ for every $c\in L$. An element $\varphi\in\mathcal{R}L$ is \textit{bounded} if there exist $p,q\in\mathbb{Q}$ such that $\varphi(p,q)=1$. The subring of $\mathcal{R}L$ consisting of its bounded elements is denoted by $\mathcal{R}^{*}L$. $L$ is \textit{pseudocompact} if $\mathcal{R}L=\mathcal{R}^{*}L$. A quotient $\pi:L\rightarrow M$ of $L$ is a \textit{$C^{*}$-quotient} if for every bounded frame homomorphism $g:\mathcal{L}(\mathbb{R})\rightarrow M$, there is a frame homomorphism $\tilde{g}:\mathcal{L}(\mathbb{R})\rightarrow L$ such that $g=h\tilde{g}$, and this captures the notion of $C^{*}$-embedded subspaces.
\section{$P_F$-frames and related frames}

In this section, we study $P_F$-frames which are point-free extension of $P_F$-spaces and give some of frame-theoretic characterizations. Azarpanah \textit{et al} \cite{81} calls $X$  a $P_F$-space if for any two zero-sets in $X$ whose union is all of $X$, at least one of them is open. This  is equivalent to say for any two disjoint cozero-sets in $X$,  at least one of them is closed. This is extended to frames as follows.                                                                                                                                                                                                                                                            
\begin{defn}\textup{ A frame $L$ is said to be a $P_F$-\textit{frame} if whenever $a, b\in CozL$ such that $a\wedge b=0$, then at least one of them is complemented.}                                                                                                                                                                                                                                                                                    
\end{defn}                                                                                                                                                                                                                                                                                        

Next, we show that the open quotient of cozero elements of a $P_F$-frame is also a $P_F$-frame.
\begin{proposition} If $L$ is a $P_F$-frame, then $\downarrow\!\!a$ is a $P_F$-frame for each $a\in CozL$.
\end{proposition}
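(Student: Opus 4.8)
The plan is to describe $Coz(\downarrow\!a)$ explicitly and then reduce the $P_F$-condition in $\downarrow\!a$ to the one in $L$. Write $\pi\colon L\to\,\downarrow\!a$ for the open quotient map $x\mapsto a\wedge x$, and recall that joins and nonempty meets in $\downarrow\!a$ coincide with those of $L$, that $0_{\downarrow a}=0_L$, and that $1_{\downarrow a}=a$. The crucial step will be the identity
\[
Coz(\downarrow\!a)=\{\,c\in CozL\mid c\le a\,\},
\]
of which, in fact, only the inclusion ``$\subseteq$'' is actually needed below.

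For ``$\supseteq$'' one simply notes that frame homomorphisms carry cozero elements to cozero elements, so $c=\pi(c)\in Coz(\downarrow\!a)$ whenever $c\in CozL$ with $c\le a$. For ``$\subseteq$'', take $u\in Coz(\downarrow\!a)$ and use the sequence characterization of cozero elements recalled in Section~2 to write $u=\bigvee_n u_n$ with $u_n\prec\!\prec u$ in $\downarrow\!a$; since $a\in CozL$, write also $a=\bigvee_k a_k$ with $a_k\prec\!\prec a$ in $L$. I would then prove the sublemma: if $x\prec y$ in $\downarrow\!a$ and $x'\prec y'$ in $L$ with $x,y,x',y'\le a$, then $x\wedge x'\prec y\wedge y'$ in $L$ --- indeed, if $t\le a$ separates $x$ from $y$ in $\downarrow\!a$ and $t'$ separates $x'$ from $y'$ in $L$, then $t\vee t'$ separates $x\wedge x'$ from $y\wedge y'$ in $L$, the only non-formal point being that $y'\le a$ together with $t'\vee y'=1$ forces $a\vee t'=1$. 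Applying this iteratively along the two interpolating families gives $u_n\wedge a_k\prec\!\prec u$ in $L$ for all $n,k$, and since $\bigvee_{n,k}(u_n\wedge a_k)=\bigvee_n(u_n\wedge a)=u$, the sequence characterization yields $u\in CozL$.

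Granting the identity, the proposition follows quickly: given $u,v\in Coz(\downarrow\!a)$ with $u\wedge v=0_{\downarrow a}$, the previous step puts $u,v\in CozL$ with $u\wedge v=0_L$, so the $P_F$-property of $L$ makes one of them, say $u$, complemented in $L$, with complement $u'$. Then $u'\wedge a\in\,\downarrow\!a$, and using $u\le a$ and distributivity one gets $u\wedge(u'\wedge a)=0=0_{\downarrow a}$ and $u\vee(u'\wedge a)=(u\vee u')\wedge(u\vee a)=1\wedge a=a=1_{\downarrow a}$, so $u$ is complemented in $\downarrow\!a$; hence $\downarrow\!a$ is a $P_F$-frame. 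The one genuine obstacle is the inclusion $Coz(\downarrow\!a)\subseteq CozL$ --- that is, transferring complete-belowness back from the quotient $\downarrow\!a$ to $L$ --- and this is exactly where the hypothesis $a\in CozL$ is used (through $a=\bigvee_k a_k$ with $a_k\prec\!\prec a$ in $L$); everything after that is routine lattice manipulation.
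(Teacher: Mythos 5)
Your proposal is correct and follows essentially the same route as the paper: reduce to the $P_F$-property of $L$ via $Coz(\downarrow\!a)\subseteq CozL$, then check that $u^{*}\wedge a$ complements $u$ in $\downarrow\!a$ by exactly the distributivity computation the paper uses. The only difference is that you actually prove the inclusion $Coz(\downarrow\!a)\subseteq CozL$ (via the sequence characterization and your $\prec$-interpolation sublemma, which checks out), whereas the paper simply asserts ``then $c,d\in CozL$'' as known.
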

\begin{proof} Let $c, d\in Coz(\downarrow\!\!a)$ such that $c\wedge d=0_{\downarrow a}$. Then $c, d\in CozL$ such that $c\wedge d=0$. The frame $L$ is a $P_F$-frame, so at least one of them is complemented, say $c$, that is, $c^{*}\wedge c=0$ and $c^{*}\vee c=1$. Since $c^{\odot}=(c^{*}\wedge a)\in\downarrow\!\!a$ (see \cite{661}), we want to show that $c^{\odot}\in Coz(\downarrow\!\!a)$. Now,

\begin{center}{$( c\vee c^{\odot})=c\vee(c^{*}\wedge a)= (c\vee c^{*})\wedge(c\vee a)=1\wedge a= a=1_{\downarrow a}$,}\end{center}

and
\begin{center}{$c\wedge c^{\odot}=c\wedge (c^{*}\wedge a)=(c\wedge c^{*})\wedge a=0\wedge a=0_{\downarrow a}$.}\end{center}

Thus $c^{\odot}\in Coz(\downarrow\!\!a)$. Therefore, $c$ is complemented in $Coz(\downarrow\!\!a)$.
\end{proof}

\vspace*{0.1in}
We recall from \cite{15} that a frame $L$ is a $P$-\textit{frame} if every $a\in CozL$ is complemented in $L$. The following lemma shows that the class of $P_F$-frames contains the class of $P$-frames.
\begin{lem} Every $P$-\textit{frame} is a $P_F$-frame.\label{tt}                                                                                                                                                                                                                                  
\end{lem}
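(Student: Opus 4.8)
The plan is simply to compare the two definitions: the $P_F$-condition asks that \emph{one} of two disjoint cozero elements be complemented, whereas the $P$-condition guarantees that \emph{every} cozero element is complemented. So the implication should be immediate, and the argument will be a one-line containment of classes.

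Concretely, I would let $a, b\in CozL$ with $a\wedge b=0$, which is exactly the hypothesis appearing in the definition of a $P_F$-frame. Since $L$ is a $P$-frame, every cozero element of $L$ is complemented in $L$; in particular $a$ is complemented (and, for that matter, so is $b$). Hence at least one of $a$ and $b$ is complemented, which is precisely what is required, and therefore $L$ is a $P_F$-frame.

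I do not anticipate any genuine obstacle here: the argument uses neither the disjointness hypothesis $a\wedge b=0$ nor any property of cozero elements beyond what the definitions already provide. The only point worth recording is that the converse fails, so the inclusion of classes is proper; but exhibiting a $P_F$-frame that is not a $P$-frame is a separate matter (to be handled by a suitable example, e.g.\ via the later characterization of $P_F$-frames as the $F$-frames that are essential $P$-frames) and is not part of the statement being proved.
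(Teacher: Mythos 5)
Your proposal is correct and takes essentially the same (one-line) approach as the paper, which also fixes disjoint $a,b\in CozL$ and invokes the $P$-frame property to conclude; your write-up just makes explicit the step that every cozero element, in particular $a$, is complemented. Nothing further is needed.
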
                                                                                                                                                                                                                                                                                         \begin{proof} Let $a,b\in CozL$ such that $a\wedge b=0$. Since $L$ is a $P$-frame, hence we are done.
\end{proof}
We observe that the converse is not true. Suppose $L$ is a $P_F$-frame. We want to show that $L$                                                                                                                                                                                                                                            
is not a $P$-frame. Let $a,b\in CozL$ be such that $a\wedge b=0$. Since $L$ is $P_F$-frame. Thus, $a$ is complemented or $b$ is complemented or both are
complemented. Consider whether only $a$ is complemented, hence $b$ is not complemented. Similarly, if only $b$ is complemented, then $a$ is not complemented.
Thus, $L$ is not a $P$-frame.
\vspace*{0.1in}                                                                                                                                                                                                                                                                                   

We recall from \cite{15,62,661} that a frame $L$ is said to be
\begin{enumerate}
\item \textit{basically disconnected} if for any $a\in CozL$, $a^{*}\vee a^{**}=1$.
\item \textit{cozero complemented} if for each $a\in CozL$, there exist $b\in CozL$ such that $a \wedge b=0$ and $a\vee b$ is dense.
\item \textit{almost $P$-frame} if for every $a\in CozL$, $a=a^{**}$.
\end{enumerate}

Ball and Walters-Wayland \cite[Proposition 8.4.7]{15} showed that a frame is a $P$-frame if and only if it is basically disconnected almost $P$-frame. The following result is stronger than the result of Ball and Walters-Wayland. To see this, we observe from Matlabyana \cite{661} that basically disconnected frames are  cozero complemented. See also from Dube and Nsonde-Nsayi \cite{4} that every cozero complemented almost $P$-frame is a $P$-frame.
\begin{proposition} A frame is a $P$-frame if and only if it is a cozero complemented almost $P$-frame.\label{thabo}                                                                                                                                                                              
\end{proposition}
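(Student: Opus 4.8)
The plan is to prove both implications directly from the definitions recalled in Section~2, keeping the argument self-contained, and then to note how it also assembles from the cited literature.

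For the implication ``cozero complemented almost $P$-frame $\Rightarrow$ $P$-frame'', I would take an arbitrary $a\in CozL$. Cozero complementedness yields $b\in CozL$ with $a\wedge b=0$ and $a\vee b$ dense. Since $CozL$ is a sub-$\sigma$-frame, $a\vee b\in CozL$, so the almost $P$-frame hypothesis gives $a\vee b=(a\vee b)^{**}$; but $a\vee b$ dense means $(a\vee b)^{*}=0$, whence $(a\vee b)^{**}=0^{*}=1$, so $a\vee b=1$. Together with $a\wedge b=0$ this says $a$ is complemented, and as $a\in CozL$ was arbitrary, $L$ is a $P$-frame. (This recovers the implication credited above to Dube and Nsonde-Nsayi \cite{4}.)

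For the converse, suppose $L$ is a $P$-frame and let $a\in CozL$. Then $a$ is complemented, hence $a=a^{**}$, so $L$ is an almost $P$-frame. For cozero complementedness the key point is that in a completely regular frame every complemented element $c$ is a cozero element: since $c\wedge c^{*}=0$ and $c^{*}\vee c=1$ we have $c\prec c$, so the constant family $c_{q}=c$ (for $q\in\mathbb{Q}\cap[0,1]$) witnesses $c\prec\!\prec c$, and the interpolation property of cozero elements then produces a cozero $s$ with $c\prec\!\prec s\prec\!\prec c$, forcing $s=c$. Applying this to $c=a^{*}$, which is complemented with complement $a$, gives $a^{*}\in CozL$; then $b:=a^{*}$ satisfies $a\wedge b=0$ with $a\vee b=1$ dense. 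Hence $L$ is cozero complemented.

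The only step above that is not a one-line lattice manipulation is the observation that complemented elements are cozero, but this is immediate from the recalled fact that $x\prec\!\prec y$ implies $x\prec\!\prec s\prec\!\prec y$ for some cozero $s$, so I do not expect a genuine obstacle. As an alternative route, the proposition follows by combining \cite[Proposition 8.4.7]{15} (a $P$-frame is a basically disconnected almost $P$-frame), \cite{661} (basically disconnected frames are cozero complemented), and \cite{4} (a cozero complemented almost $P$-frame is a $P$-frame).
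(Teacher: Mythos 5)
Your proof is correct. For the substantive direction (cozero complemented almost $P$-frame $\Rightarrow$ $P$-frame) you take essentially the same route as the paper, but your rendering is cleaner: you apply the almost-$P$ hypothesis directly to the cozero element $a\vee b$ to get $a\vee b=(a\vee b)^{**}=0^{*}=1$, whereas the paper manipulates $a^{*}\wedge b^{*}$ and its pseudocomplement before arriving at the same conclusion. For the easy direction the two arguments genuinely diverge in their decomposition: the paper factors through basic disconnectedness, citing \cite[Proposition 8.4.7]{15} together with Matlabyana's result that basically disconnected frames are cozero complemented, while you give a direct lattice argument whose only nontrivial ingredient is that complemented elements are cozero (which you correctly derive from $c\prec c$, the constant chain witnessing $c\prec\!\prec c$, and the interpolation of a cozero element between completely-below pairs --- a fact the paper itself uses without comment in its proof of Proposition \ref{aa1}). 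What your version buys is self-containedness and independence from the basically-disconnected machinery; what the paper's citation route buys is brevity and an explicit link to the Ball--Walters-Wayland characterization it is claiming to strengthen. Both are sound.
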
                                                                                                                                                                                                                                                                                 

\begin{proof} The left to right implication is immediate from \cite[Proposition 8.4.7]{15}. Conversely, suppose that $a\in CozL$. We want to show that $a$ is complemented.
Since $L$ is cozero complemented, it follows that there exists $b\in CozL$ such that $a\wedge b=0$ and $a\vee b$ is dense. Then
 $$a^{*}\wedge b^{*}=(a\vee b)^{*}=0\Rightarrow  (a^{*}\wedge b^{*})^{*}=0^{*}=1.$$ Since $L$                                                                                                                                   
is an almost $P$-frame, it follows that $a=a^{**}$ and $a\vee b^{**}=1$. Thus, $a$ is complemented. Therefore, $L$ is a $P$-frame.                                                                                                                                                                      
\end{proof}

The authors in \cite{81} gave an example of a $P_F$-space which is not basically disconnected and an example of a basically disconnected space which is
 not a $P_F$-space. In the context of classical topology, $P_F$-spaces and basically disconnected spaces are incomparable.  This will also hold in the larger terrain of point-free topology. The following corollary follows immediately from \cite[Proposition 8.4.7]{15}, Proposition \ref{thabo} and Lemma \ref{tt}.

\begin{cor} Every basically disconnected (cozero complemented) almost $P$-frame is a $P_F$-frame.\label{jkj}                                                                                                                                                                                      
\end{cor}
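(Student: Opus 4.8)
The plan is to chain together the three results we already have in hand. Corollary \ref{jkj} states: ``Every basically disconnected (cozero complemented) almost $P$-frame is a $P_F$-frame.'' So I need only assemble the implications \emph{basically disconnected almost $P$-frame} $\Rightarrow$ \emph{$P$-frame} $\Rightarrow$ \emph{$P_F$-frame}, and separately \emph{cozero complemented almost $P$-frame} $\Rightarrow$ \emph{$P$-frame} $\Rightarrow$ \emph{$P_F$-frame}. The first chain is Proposition 8.4.7 of \cite{15} (a frame is a $P$-frame iff it is a basically disconnected almost $P$-frame) followed by Lemma \ref{tt} (every $P$-frame is a $P_F$-frame). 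The second chain is Proposition \ref{thabo} (a frame is a $P$-frame iff it is a cozero complemented almost $P$-frame) followed again by Lemma \ref{tt}.

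First I would take an arbitrary frame $L$ that is a basically disconnected almost $P$-frame. By \cite[Proposition 8.4.7]{15}, $L$ is a $P$-frame. Then by Lemma \ref{tt}, $L$ is a $P_F$-frame. For the parenthetical version, I would instead take $L$ to be a cozero complemented almost $P$-frame; by Proposition \ref{thabo} it is a $P$-frame, and again Lemma \ref{tt} makes it a $P_F$-frame. Since both hypotheses collapse to ``$P$-frame'' via the two quoted characterizations, the single sentence of the corollary covers both cases at once.

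There is essentially no obstacle here — the corollary is a bookkeeping consequence of results already proved or cited in the excerpt. The only thing worth being careful about is making explicit that the role of the ``almost $P$-frame'' hypothesis is exactly to upgrade ``basically disconnected'' (resp. ``cozero complemented'') to ``$P$-frame'' via Proposition 8.4.7 (resp. Proposition \ref{thabo}), rather than trying to argue the $P_F$ property directly from disjoint cozero elements. Concretely:

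\begin{proof}
Let $L$ be a basically disconnected almost $P$-frame. By \cite[Proposition 8.4.7]{15}, $L$ is a $P$-frame, and hence by Lemma \ref{tt} it is a $P_F$-frame. If instead $L$ is a cozero complemented almost $P$-frame, then by Proposition \ref{thabo} it is a $P$-frame, and again by Lemma \ref{tt} it is a $P_F$-frame.
\end{proof}
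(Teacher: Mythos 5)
Your proposal is correct and follows exactly the route the paper intends: the paper states that the corollary ``follows immediately from \cite[Proposition 8.4.7]{15}, Proposition \ref{thabo} and Lemma \ref{tt}'', which is precisely the two-chain argument you spell out. You have merely made explicit the bookkeeping the authors left implicit.
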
                                                                                                                                                                                                                                                                                         Next, we give a condition for a $P_F$-frame to be basically disconnected. The following proposition is, of course, an extension of \cite[Proposition 2.7 (1)]{81}.
\begin{proposition} Every cozero complemented $P_F$-\textit{frame} is basically disconnected. \label{a}                                                                                                                                                                                           
\end{proposition}                                                                                                                                                                                                                                                                                 

\begin{proof} Suppose that $L$ is a cozero complemented $P_F$-frame, we want to show that it is basically disconnected. Let
$a\in CozL$. The frame $L$ is cozero complemented, so there is $b\in CozL$ such that $a\wedge b=0$ and $a\vee b$ is dense. Again, $L$ is a $P_F$-frame, at least one of them is
complemented, say $a$. Since, again, $a$ is complemented hence, immediately, $a^{*}$ is the complement of $a$ and $a^{**}=a$.                                                                                                                                                                                               
\end{proof}

It is shown in  \cite[Proposition 2.2]{zfg} that a frame  $L$ is an \textit{$O_z$- frame}
 if and only if every regular element of $L$ is a cozero element.  Using this characterization,  Matlabyana \cite{661} has shown that an $O_z$-frame is cozero complemented. The following corollary is apparent from Matlabyana's result and Corollary \ref{jkj}.
\begin{cor} Every frame that is both an $O_z$-frame and an almost $P$-frame is a $P_F$-frame.\label{asa}                                                                                                                                                                                          
\end{cor}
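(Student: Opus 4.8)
The plan is to prove Corollary \ref{asa} by chaining together the two results explicitly invoked just before its statement. First I would recall that by the characterization in \cite[Proposition 2.2]{zfg}, an $O_z$-frame is one in which every regular element is a cozero element, and that Matlabyana \cite{661} has deduced from this that every $O_z$-frame is cozero complemented. Thus, if $L$ is both an $O_z$-frame and an almost $P$-frame, then $L$ is a cozero complemented almost $P$-frame.

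The second step is simply to apply Corollary \ref{jkj}, which asserts that every cozero complemented almost $P$-frame is a $P_F$-frame. Since $L$ is now known to be a cozero complemented almost $P$-frame, it follows at once that $L$ is a $P_F$-frame, which is exactly the claim.

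If one wanted a more self-contained argument that does not lean on Corollary \ref{jkj} as a black box, one could instead observe that a cozero complemented almost $P$-frame is a $P$-frame by Proposition \ref{thabo}, and then a $P$-frame is a $P_F$-frame by Lemma \ref{tt}. Either route works; the first is shorter, so I would present that one. There is essentially no obstacle here: the corollary is a direct composition of previously established implications, and the only thing to be careful about is correctly citing that an $O_z$-frame is cozero complemented (the nontrivial input, which is Matlabyana's result built on the regular-element characterization of $O_z$-frames). A one- or two-sentence proof suffices.

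\begin{proof} By \cite[Proposition 2.2]{zfg}, a frame is an $O_z$-frame precisely when every regular element is a cozero element, and Matlabyana \cite{661} has shown that every such frame is cozero complemented. Hence a frame that is both an $O_z$-frame and an almost $P$-frame is a cozero complemented almost $P$-frame, and so is a $P_F$-frame by Corollary \ref{jkj}.
\end{proof}
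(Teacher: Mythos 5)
Your proof is correct and follows exactly the route the paper indicates: use Matlabyana's result that every $O_z$-frame is cozero complemented and then apply Corollary \ref{jkj}. Nothing further is needed.
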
                                                                                                                                                                                                                                                                                         

Recall from  \cite{661} that a frame $L$ is said to satisfy \textit{countable chain condition ($ccc$)} if every collection of pairwise disjoint elements of $L$ is countable. It is shown in the same article that a frame  satisfying $ccc$ is cozero complemented. From,  Corollary \ref{jkj}, the following is immediate.

\begin{cor} Every frame that is an almost $P$-frame with $ccc$  is a $P_F$-frame.\label{aba}                                                                                                                                                                                                      
\end{cor}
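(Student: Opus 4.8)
The plan is to deduce this from Corollary \ref{jkj} by showing that an almost $P$-frame satisfying the countable chain condition is cozero complemented, and then invoking the cited fact. Concretely, the statement to prove is: every frame that is an almost $P$-frame with $ccc$ is a $P_F$-frame. Since Corollary \ref{jkj} already tells us that every cozero complemented almost $P$-frame is a $P_F$-frame, it suffices to establish that a frame with $ccc$ is cozero complemented. This is the fact attributed to Matlabyana \cite{661} in the paragraph immediately preceding the statement, so in principle the proof is a one-line citation plus an appeal to Corollary \ref{jkj}.

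For completeness I would at least sketch why $ccc$ implies cozero complemented, in case a self-contained argument is wanted. First I would fix $a \in CozL$ and consider the family $\mathcal{F}$ of cozero elements $b$ with $b \wedge a = 0$; using Zorn's lemma on pairwise-disjoint subfamilies (or a maximality argument on $\bigvee$ of such $b$'s within $CozL$, which is a $\sigma$-frame), I would produce a cozero element $b_0$ maximal with respect to $a \wedge b_0 = 0$. The point where $ccc$ enters is in guaranteeing that the supremum defining $b_0$ is realised by a countable join and hence stays inside the $\sigma$-frame $CozL$: a pairwise disjoint refining family is countable, so its join is a cozero element. Finally I would check that $a \vee b_0$ is dense: if $(a \vee b_0)^* \neq 0$, complete regularity gives a nonzero cozero element $c \leq (a\vee b_0)^*$, which is disjoint from both $a$ and $b_0$, contradicting maximality of $b_0$; hence $(a \vee b_0)^* = 0$, so $a$ is cozero complemented by $b_0$.

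Having shown $L$ is cozero complemented, and given by hypothesis that $L$ is an almost $P$-frame, Corollary \ref{jkj} applies directly to conclude that $L$ is a $P_F$-frame, which completes the proof.

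The main obstacle, such as it is, is the bookkeeping in the maximality argument: one must be careful that the witnessing disjoint family can be chosen \emph{inside} $CozL$ and that its join remains a cozero element, which is exactly where the countability coming from $ccc$ is indispensable (an arbitrary join of cozero elements need not be a cozero element). If the authors are content to quote \cite{661} for ``$ccc \Rightarrow$ cozero complemented'' — as the surrounding text suggests — then there is essentially no obstacle and the corollary is immediate.
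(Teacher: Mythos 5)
Your proposal takes essentially the same route as the paper: the corollary is obtained by quoting Matlabyana's result from \cite{661} that a frame with $ccc$ is cozero complemented and then applying Corollary \ref{jkj}, which is exactly what the paper does (it gives no further argument). Your supplementary sketch of why $ccc$ implies cozero complemented --- a maximal pairwise disjoint family of cozero elements disjoint from $a$, countable by $ccc$ so its join stays in $Coz L$, with density of $a\vee b_0$ forced by maximality --- is sound and goes beyond what the paper records, but it is not needed for the corollary as stated.
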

We recall from \cite{15} that a frame $L$ is said to be an \textit{F-frame} if $L\rightarrow \downarrow\! a$ is a $C^{*}$-quotient for each $a\in CozL$. For the following result, we show that the class of $P_F$-frames is contained in the class of $F$-frames by using the fact that a frame $L$ is an $F$-frame if and only if whenever $a,b\in CozL$ such that $a\wedge b=0$ there exist $c,d\in CozL$ such that $c\vee d=1$ and $c\wedge a=0=d\wedge b$ (see \cite[Proposition 8.4.10]{15}).                                                                                                                                                                                 
\begin{proposition} Every $P_F$-\textit{frame} is an $F$-frame.\label{aa1}                                                                                                                                                                                                                        
\end{proposition}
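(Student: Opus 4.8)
The plan is to invoke the internal characterization of $F$-frames recalled just above the statement, namely that $L$ is an $F$-frame precisely when, for all $a,b\in CozL$ with $a\wedge b=0$, there exist $c,d\in CozL$ with $c\vee d=1$ and $c\wedge a=0=d\wedge b$. So I would start with a $P_F$-frame $L$, fix $a,b\in CozL$ with $a\wedge b=0$, and manufacture the required $c$ and $d$ directly from the complementation supplied by the $P_F$-hypothesis.

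By definition of a $P_F$-frame, at least one of $a,b$ is complemented; by the symmetry of the $F$-frame condition in $a$ and $b$ (just interchange $c$ and $d$), I may assume it is $a$, so $a\wedge a^{*}=0$ and $a\vee a^{*}=1$. The single point that needs a word of justification is that $a^{*}\in CozL$: any complemented element $e$ satisfies $e\prec e$ (with separating element $e^{*}$), hence $e\prec\!\prec e$ via the constant interpolating chain $c_q=e$, and then the sequential description of cozero elements recalled in Section~2 (take $e_n=e$ for every $n$, so that $e_n\prec\!\prec e$ and $e=\bigvee e_n$) shows $e\in CozL$. Applying this to $e=a^{*}$, which is complemented with complement $a$, gives $a^{*}\in CozL$. (Equivalently, one may just cite that complemented elements of a completely regular frame are cozero.)

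With that in hand I would simply set $c=a^{*}$ and $d=a$. Both lie in $CozL$, and $c\vee d=a^{*}\vee a=1$, while $c\wedge a=a^{*}\wedge a=0$ and $d\wedge b=a\wedge b=0$; this is exactly the configuration demanded by the characterization of $F$-frames, whence $L$ is an $F$-frame. (If instead it is $b$ that is complemented, take $c=b$ and $d=b^{*}$.)

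I do not anticipate any real obstacle here: the whole content of the argument is the observation that the complement of a complemented cozero element is again a cozero element, after which the witnesses $c=a^{*}$ and $d=a$ fall straight out of $a\vee a^{*}=1$ together with $a\wedge b=0$. The only thing to be careful about is to phrase the ``without loss of generality'' correctly, since the $P_F$-hypothesis guarantees only that \emph{one} of the two given disjoint cozero elements is complemented, not a prescribed one.
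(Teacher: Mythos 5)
Your proof is correct, and it reaches the conclusion by a more direct route than the paper. Both arguments have the same skeleton: invoke the characterization of $F$-frames from \cite[Proposition 8.4.10]{15}, use the $P_F$-hypothesis to get (say) $a$ complemented, and observe that $a^{*}$, being complemented, is a cozero element. Where you diverge is in producing the witnesses: you simply take $c=a^{*}$ and $d=a$, which already satisfy $c\vee d=a^{*}\vee a=1$, $c\wedge a=0$ and $d\wedge b\leq a\wedge b=0$. The paper instead applies normality of $CozL$ to the cover $a\vee a^{*}=1$ to manufacture a pair $c,d\in CozL$ with $c\wedge d=0$ and $c\vee a=1=d\vee a^{*}$, then deduces $c\leq a^{*}$ and $d\leq a$ to verify the same three conditions. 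The normality step buys the extra property $c\wedge d=0$, but that property is not required by the characterization being used, so your shortcut loses nothing; it is the more economical argument. Your justification that complemented elements are cozero (constant interpolating chain plus the sequential description of cozero elements) is sound and matches the fact the paper uses implicitly, and your handling of the without-loss-of-generality symmetry between $a$ and $b$ is correct.
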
                                                                                                                                                                                                                                                                                 

\begin{proof} Let $a, b\in CozL$ such that $a\wedge b=0$. $L$ is a $P_F$-frame, so at least one is complemented, say $a$. Then $a\vee a^{*}=1$. Then $a^{*}$ as
a complemented element is a cozero element. By normality of $CozL$, there exist $c, d\in CozL$ such that $c\wedge d=0$ and $c\vee a=1=d\vee a^{*}$. Now                                                                                                                                           
$c\vee a=1\Rightarrow a^{*}\prec c$. Also $c\vee d\geq a^{*}\vee d=1$, and also $c\wedge d=0$ and $c\vee a=1$ $\Rightarrow$ $d\prec a$. Then                                                                                                                                                          
$d\wedge b\leq a\wedge b=0$.  Also $c\wedge d=0$ and $d\vee a^{*}=1 \Rightarrow c\prec a^{*}$, then $c\wedge a\leq a^{*}\wedge a=0$. Therefore                                                                                                                                                    
$c\wedge a=0=d\wedge b$. Thus $L$ is an $F$-frame.                                                                                                                                                                                                                                                
\end{proof}                                                                                                                                                                                                                                                                                       We recall from \cite{15} that a frame $L$ is said to be an $F'$-\textit{frame} if $a\wedge b=0$ for $a,b\in CozL$ implies $a^{*}\vee b^{*}=1$. It is well-known from the same article that every $F$-frame is an $F'$-frame. We recall from \cite[Proposition 8.4.10]{15} that a frame $L$ is said to be a \textit{quasi} $F$-\textit{frame} if and only if whenever $a, b\in CozL$ such that $a\wedge b=0$ and $a\vee b$ is dense, then there exist $c, d\in CozL$ such that $c\vee d=1$ and $c\wedge a=0=d\wedge b$. It is well-known from the same article that every $F$-frame is a quasi $F$-frame. Therefore, the following are immediate from Proposition \ref{aa1}.                                                                                                                                                                                         
\begin{cor} Every $P_F$-frame is an $F'$-frame.                                                                                                                                                                                                                                                   
\end{cor}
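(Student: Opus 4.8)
The plan is to obtain this corollary as an immediate consequence of what has already been set up, so the main work is organizational rather than computational. First I would simply chain together Proposition~\ref{aa1}, which says every $P_F$-frame is an $F$-frame, with the fact recalled just above the corollary that every $F$-frame is an $F'$-frame; composing these two implications yields the statement at once. At the level of the paper's logical scaffolding this is a one-line argument, and it is presumably the intended proof.

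For a more self-contained treatment I would instead argue directly from the definitions. Let $L$ be a $P_F$-frame and let $a,b\in CozL$ with $a\wedge b=0$. By the definition of a $P_F$-frame at least one of $a,b$ is complemented; by symmetry assume it is $a$, so $a\vee a^{*}=1$ and hence $a=a^{**}$. From $a\wedge b=0$ we get $b\le a^{*}$, and since pseudocomplementation reverses order this gives $b^{*}\ge a^{**}=a$. Therefore $a^{*}\vee b^{*}\ge a^{*}\vee a=1$, i.e. $a^{*}\vee b^{*}=1$, which is precisely the $F'$-frame condition.

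The only point needing a word of care is the symmetry step: if it is $b$ rather than $a$ that is complemented, one repeats the same short computation with the roles of $a$ and $b$ interchanged and again concludes $a^{*}\vee b^{*}=1$. Beyond that there is no real obstacle — the genuinely substantive work was already carried out in Proposition~\ref{aa1}, and this corollary merely bundles it with the routine implication $F\Rightarrow F'$.
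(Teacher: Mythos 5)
Your first paragraph is exactly the paper's proof: the corollary is stated to be ``immediate from Proposition \ref{aa1}'' together with the recalled fact that every $F$-frame is an $F'$-frame, so your primary route matches the intended one. Your alternative direct argument is also correct (complemented implies $a=a^{**}$, then $a\wedge b=0$ gives $b\le a^{*}$, hence $b^{*}\ge a^{**}=a$ and $a^{*}\vee b^{*}=1$) and is in fact more elementary, since it bypasses the normality-of-$CozL$ argument used to prove Proposition \ref{aa1} and establishes the $F'$ property straight from the definition of a $P_F$-frame.
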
                                                                                                                                                                                                                                                                                         
\begin{cor} Every $P_F$-frame is a quasi $F$-frame.                                                                                                                                                                                                                                               
\end{cor}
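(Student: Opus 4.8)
The plan is to deduce this directly from Proposition \ref{aa1}. Recall the characterization of quasi $F$-frames from \cite[Proposition 8.4.10]{15} quoted in the excerpt: $L$ is a quasi $F$-frame precisely when, for all $a,b\in CozL$ with $a\wedge b=0$ \emph{and} $a\vee b$ dense, there exist $c,d\in CozL$ with $c\vee d=1$ and $c\wedge a=0=d\wedge b$. This is a weakening of the $F$-frame characterization used in the proof of Proposition \ref{aa1}, which requires the same conclusion under the weaker hypothesis that merely $a\wedge b=0$, with no density requirement on $a\vee b$. Hence the corollary is immediate: by Proposition \ref{aa1} every $P_F$-frame is an $F$-frame, and, as recalled in the excerpt, every $F$-frame is a quasi $F$-frame.

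Alternatively, one can give a self-contained argument that simply repeats the proof of Proposition \ref{aa1} while carrying along the (unused) density hypothesis. Given $a,b\in CozL$ with $a\wedge b=0$ and $a\vee b$ dense, the $P_F$-property yields a complemented one of the two, say $a$; then $a^{*}$ is complemented, hence a cozero element, and $a\vee a^{*}=1$. Normality of $CozL$ produces $c,d\in CozL$ with $c\wedge d=0$ and $c\vee a=1=d\vee a^{*}$. The resulting rather-below relations $d\prec a$ and $c\prec a^{*}$ force $d\wedge b\le a\wedge b=0$ and $c\wedge a\le a^{*}\wedge a=0$, while $c\vee d\ge a^{*}\vee d=1$; this is exactly the quasi $F$-condition.

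There is essentially no obstacle here: the only observation needed is that the density of $a\vee b$ plays no role, so the statement is a formal consequence of Proposition \ref{aa1}. I would present it in the one-line form, citing Proposition \ref{aa1} together with the standard fact that every $F$-frame is a quasi $F$-frame.
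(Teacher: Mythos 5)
Your proposal is correct and matches the paper's own route: the corollary is stated as immediate from Proposition \ref{aa1} together with the recalled fact that every $F$-frame is a quasi $F$-frame, which is exactly your one-line argument. Your optional self-contained version (rerunning the proof of Proposition \ref{aa1} with the unused density hypothesis) is also fine but unnecessary.
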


\section{Transportations  of $P_F$-frames}
In this section, we show that $P_F$-frames are preserved under the transportation of $coz$-onto dense frame homomorphisms. On the other hand, $P_F$-frames are also reflected
by nearly open frame homomorphisms which are $coz$-codense.                                                                                                                                                                                                                                       

\begin{proposition}  Let $\pi:L\rightarrow M$ be a $coz$-$onto$, dense frame homomorphism. If $L$ is a $P_F$-frame, then so is $M$.                                                                                                                                                                 
\end{proposition}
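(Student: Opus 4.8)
The plan is to take two disjoint cozero elements of $M$, pull them back along $\pi$ to cozero elements of $L$ using the $coz$-onto hypothesis, use density of $\pi$ to see that the preimages are still disjoint, invoke the $P_F$-property of $L$, and finally push the resulting complementation relation forward through $\pi$.

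First I would start with $a, b \in CozM$ such that $a \wedge b = 0$. Since $\pi$ is $coz$-onto, I can choose $u, v \in CozL$ with $\pi(u) = a$ and $\pi(v) = b$. Then $\pi(u \wedge v) = \pi(u) \wedge \pi(v) = a \wedge b = 0$, and because $\pi$ is dense this forces $u \wedge v = 0$. Now $u, v \in CozL$ are disjoint and $L$ is a $P_F$-frame, so at least one of them, say $u$, is complemented in $L$, i.e. $u \vee u^{*} = 1$ and $u \wedge u^{*} = 0$. Applying $\pi$ yields $a \vee \pi(u^{*}) = \pi(u \vee u^{*}) = \pi(1) = 1$ and $a \wedge \pi(u^{*}) = \pi(u \wedge u^{*}) = \pi(0) = 0$, so $\pi(u^{*})$ is a complement of $a$ in $M$. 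Since complements in a frame are unique, $a$ is complemented (with $a^{*} = \pi(u^{*})$). The symmetric argument, with $v$ in place of $u$, shows that $b$ is complemented in the remaining case. Hence whenever two cozero elements of $M$ meet in $0$, one of them is complemented, which is exactly the assertion that $M$ is a $P_F$-frame.

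I do not anticipate a serious obstacle here; the proof is essentially a transport argument. The only points that need care are (i) making sure the two structural hypotheses on $\pi$ are used in precisely the right places, namely $coz$-ontoness to obtain cozero preimages $u, v$ and density to transfer the disjointness $a \wedge b = 0$ back to $u \wedge v = 0$, and (ii) the harmless but necessary remark that being \emph{complemented} is equivalent to merely \emph{possessing a complement} (by uniqueness of complements in a distributive lattice), so that pushing forward the witnessing pair $(u, u^{*})$ through $\pi$ suffices and one need not separately compute the pseudocomplement in $M$.
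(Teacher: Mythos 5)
Your proof is correct and follows essentially the same route as the paper's: pull the disjoint cozero elements back via $coz$-ontoness, use density to recover disjointness in $L$, apply the $P_F$-property, and push the complement forward. The only difference is that you spell out explicitly why the image of the witnessing complement yields complementation of $a$ in $M$, where the paper simply appeals to the fact that frame homomorphisms preserve complemented elements.
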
                                                                                                                                                                                                                                                                                 

\begin{proof} Let $a, b\in CozM$ such that $a\wedge b=0$. Since $\pi$ is $coz$-$onto$, there exist $x, y\in CozL$ such that $\pi(x)=a$ and $\pi(y)=b$. Now,                                                                                                                                              
$$\pi(x\wedge y)=\pi(x)\wedge \pi(y)=a\wedge b=0_M.$$ By density of $\pi$, $x\wedge y=0_L$. But $L$ is a $P_F$-frame, so at least one of them is
complemented, say $x$. Frame homomorphisms are well known for preserving complemented elements. Thus, $M$ is a $P_F$-frame.
\end{proof}                                                                                                                                                                                                                                                                                       

\begin{proposition} Let $\pi:L\rightarrow M$ be a nearly open and $coz$-codense frame homomorphism. If $M$ is a $P_F$-frame, then so is $L$.                                                                                                                                                        
\end{proposition}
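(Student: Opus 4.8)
The plan is to transfer the $P_F$ property from $M$ back along $\pi$. Let $a,b\in CozL$ with $a\wedge b=0$; the goal is to show that at least one of $a,b$ is complemented in $L$. Since a frame homomorphism carries cozero elements to cozero elements, $\pi(a),\pi(b)\in CozM$ and $\pi(a)\wedge\pi(b)=\pi(a\wedge b)=0_M$. As $M$ is a $P_F$-frame, one of $\pi(a),\pi(b)$ is complemented in $M$, say $\pi(a)$; thus $\pi(a)\vee\pi(a)^{*}=1_M$, and moreover $\pi(a)^{*}$ is complemented (with complement $\pi(a)$), hence is itself a cozero element of $M$. If instead $\pi(b)$ is the complemented one, the argument below produces $b$ complemented in $L$.

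Next I would invoke near-openness: $\pi(a^{*})=\pi(a)^{*}$, so that
\[
\pi(a\vee a^{*})=\pi(a)\vee\pi(a)^{*}=1_M .
\]
Since ``$a$ is complemented in $L$'' means precisely ``$a\vee a^{*}=1_L$'', it now suffices to descend the identity $\pi(a\vee a^{*})=1_M$ to $L$. Were $\pi$ codense, this would be immediate; the subtle point is that $coz$-codensity may be applied only to a cozero element of $L$, whereas $a^{*}$, and hence $a\vee a^{*}$, need not be one.

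This descent is the crux, and the step I expect to demand real work. My approach would be to approximate $a^{*}$ from below by cozero elements: since $L$ is completely regular, $a^{*}=\bigvee\{c\in CozL\mid c\wedge a=0\}$, a join over an upward-directed family, and therefore
\[
1_M=\pi(a)\vee\pi(a^{*})=\bigvee\nolimits^{\uparrow}\{\,\pi(a\vee c)\mid c\in CozL,\ c\wedge a=0\,\},
\]
a directed join of cozero elements of $M$ equal to $1_M$. It then suffices to exhibit a single $c\in CozL$ with $c\wedge a=0$ and $\pi(a\vee c)=1_M$: for such a $c$ we have $a\vee c\in CozL$, so $coz$-codensity forces $a\vee c=1_L$, and together with $c\wedge a=0$ this yields $a\vee a^{*}=1_L$, i.e.\ $a$ is complemented and $L$ is a $P_F$-frame. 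Producing this $c$ is the heart of the matter: one must exploit that $\pi(a)^{*}=\pi(a^{*})$ is a cozero element of $M$, hence a countable join of smaller cozero elements, and realize these (or enough of them) inside the cozero part of $L$ lying beneath $a^{*}$. This is the one place where both hypotheses on $\pi$ are genuinely needed, and where I expect the main obstacle to lie.
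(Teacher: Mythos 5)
Your first half coincides with the paper's proof: push $a,b$ forward, use that $M$ is a $P_F$-frame to get (say) $\pi(a)$ complemented, and use near-openness to obtain $\pi(a\vee a^{*})=\pi(a)\vee\pi(a)^{*}=1_M$. The divergence is in the final descent. The paper concludes in one line ``by coz-codensity, $a\vee a^{*}=1_L$''; you rightly observe that coz-codensity, as defined, applies only to cozero elements of $L$ and that $a\vee a^{*}$ need not be one --- but you then leave the descent unproved. You reduce it to exhibiting a single $c\in CozL$ with $c\wedge a=0$ and $\pi(a\vee c)=1_M$, and you explicitly defer producing such a $c$ as ``the heart of the matter.'' That is a genuine gap, not a deferred routine detail. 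Moreover, the reduction as you have set it up cannot be closed by the directed-join observation alone: from $1_M=\bigvee\{\pi(a\vee c)\mid c\in CozL,\ c\wedge a=0\}$ with the family directed, nothing forces any single term to equal $1_M$; that inference would need compactness of $M$ or a comparable hypothesis, none of which is available. Your closing suggestion --- to realize the countably many cozero pieces of $\pi(a)^{*}$ ``inside the cozero part of $L$'' --- is a coz-onto-type requirement that is likewise not among the hypotheses, so the route does not terminate.

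In short, your submission proves exactly what the paper's proof proves up to its last sentence, correctly flags that last sentence as the delicate point (the paper applies coz-codensity to $a\vee a^{*}$ without verifying it is a cozero element), and then stops. Diagnosing where the difficulty lies is valuable, but the proposal does not resolve it, so it does not constitute a proof of the proposition.
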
                                                                                                                                                                                                                                                                                 

\begin{proof} Let $a, b\in CozL$ such that $a\wedge b=0$. Then since frame homomorphisms preserve cozero elements, it follows that $\pi(a)$ and $\pi(b)$ are cozero                                                                                                                                  
elements in $M$. Furthermore,                                                                                                                                                                                                                                                                     
\begin{center} {$\pi(a)\wedge \pi(b)=\pi(a\wedge b)=\pi(0_L)=0_M$.}                                                                                                                                                                                                                                       
\end{center}                                                                                                                                                                                                                                                                                      
$M$  is a $P_F$-frame, so at least one of $\pi(a)$ or $\pi(b)$ is complemented, say $\pi(a)$. That is, \begin{center}{$\pi(a)\wedge \pi(a)^{*}=0_M$                                                                                                                                      
\textup{ and }$\pi(a)\vee \pi(a)^{*}=1_M$.}
\end{center} Now $\pi(a\wedge a^{*})=\pi(a)\wedge \pi(a^{*})\leq \pi(a)\wedge \pi(a)^{*}=0_M$. By nearly openness of $\pi$, we have $\pi(a\vee a^{*})=\pi(a)\vee                                                                                                                                             
\pi(a^{*})=\pi(a)\vee \pi(a)^{*}=1_M$. By $coz$-codense of $\pi$, $a\vee a^{*}=1_L$. Hence $a$ is a complemented element in $L$.  Thus, $L$ is a $P_F$-frame.                                                                                                                                                      
\end{proof}                                                                                                                                                                                                                                                                                       
$P_F$-frames relate to the Stone-\textup{$\check{C}$}ech compactification as follows.

\begin{proposition} A frame $L$ is a $P_F$-frame if and only if $\beta{L}$ is a $P_F$-frame.                                                                                                                                                                                                               
\end{proposition}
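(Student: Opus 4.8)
The plan is to use the two transportation results just proved, together with standard facts about the Stone–Čech compactification $\vartheta\colon\beta L\to L$. The join map $\vartheta$ is dense and onto, and it is well known to be $coz$-onto (every cozero element of $L$ lifts along $\vartheta$; indeed $\vartheta$ restricts to an isomorphism $\mathrm{Coz}\,\beta L\to\mathrm{Coz}\,L$). Hence the left-to-right direction is immediate: if $\beta L$ is a $P_F$-frame, then applying the preceding proposition (preservation under dense $coz$-onto homomorphisms) to $\vartheta$ gives that $L$ is a $P_F$-frame.

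For the converse, suppose $L$ is a $P_F$-frame and let $a,b\in\mathrm{Coz}\,\beta L$ with $a\wedge b=0$. Since $\vartheta$ is a frame homomorphism it preserves cozero elements and meets, so $\vartheta(a),\vartheta(b)\in\mathrm{Coz}\,L$ and $\vartheta(a)\wedge\vartheta(b)=\vartheta(a\wedge b)=\vartheta(0)=0$. As $L$ is a $P_F$-frame, one of them is complemented, say $\vartheta(a)$, so $\vartheta(a)\vee\vartheta(a)^{*}=1$. Now I would transport this back to $\beta L$ using the right adjoint $r_L$: apply $r_L$ to the cozero element $\vartheta(a)$. Because $\vartheta$ restricts to an isomorphism on cozero parts, $r_L(\vartheta(a))$ is precisely $a$ (the unique cozero element of $\beta L$ mapping to $\vartheta(a)$), and similarly $r_L(\vartheta(a)^{*})$ is the cozero element of $\beta L$ mapping to $\vartheta(a)^{*}$; call it $a'$. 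Then $\vartheta(a\wedge a')=\vartheta(a)\wedge\vartheta(a)^{*}=0$, so by density of $\vartheta$ we get $a\wedge a'=0$, whence $a'\le a^{*}$; and $\vartheta(a\vee a')=\vartheta(a)\vee\vartheta(a)^{*}=1$, so $a\vee a^{*}\ge a\vee a'=1$ since $\vartheta$ is codense (its right adjoint $r_L$ has $\bigvee r_L(1)=1$ and $\vartheta(1)=1$ with $1$ the only preimage). Thus $a$ is complemented in $\beta L$, and $\beta L$ is a $P_F$-frame.

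Alternatively, and perhaps more cleanly, one can invoke Proposition~\ref{aa1} and the known fact that $L$ is an $F$-frame if and only if $\beta L$ is an $F$-frame, combined with the fact that $L$ is a $P$-frame iff $\beta L$ is (both appear in \cite{15}), after first establishing the characterization of $P_F$-frames as essential $P$-frames that are $F$-frames promised in the abstract; but since that characterization is not yet available at this point in the paper, the direct argument via $\vartheta$ and $r_L$ above is the one I would write out.

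The main obstacle is the converse direction, specifically justifying that a complemented cozero element of $L$ pulls back to a complemented cozero element of $\beta L$. The key technical point is that $\vartheta$ induces an isomorphism $\mathrm{Coz}\,\beta L\cong\mathrm{Coz}\,L$ (so complementation of a cozero element is reflected), together with codensity of $\vartheta$ to upgrade $a\vee a'=1$ in $L$ back to $a\vee a^{*}=1$ in $\beta L$. Once those two facts are cited, the rest is the routine meet/join bookkeeping indicated above.
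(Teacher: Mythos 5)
Your right-to-left direction is correct and is essentially the paper's own argument in packaged form: the join map $\vartheta\colon\beta L\to L$ is dense, onto and $coz$-onto, so the preservation result for dense $coz$-onto homomorphisms applies directly; this is a slightly cleaner way of writing exactly the computation the paper performs by hand.

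The left-to-right direction, however, has a genuine gap, resting on two claims that are false in general. First, $\vartheta$ does \emph{not} restrict to an isomorphism $Coz\,\beta L\to Coz\,L$: the restriction is onto but not one-to-one, so $r_L(\vartheta(a))$ need not equal $a$ --- one only has $a\le r_L(\vartheta(a))$. Second, $\vartheta$ is \emph{not} codense unless $L$ is compact: a completely regular ideal can have join $1_L$ without being the top ideal, so you cannot pass from $\vartheta(a\vee a')=1_L$ to $a\vee a'=1_{\beta L}$. Without these two steps nothing forces $a\vee a^{*}=1_{\beta L}$. You should know that the paper's proof of this direction suffers from the same defect (it silently identifies $I$ with $r_L(\bigvee I)$ in the line $r_L(a)\vee r_L(a)^{*}=I\vee I^{*}$), and the obstacle you flagged in your closing paragraph appears to be unremovable: take $L$ to be the frame of open subsets of the discrete space $\mathbb{N}$, a $P$-frame and hence a $P_F$-frame; then $\beta L$ is the open-set frame of $\beta\mathbb{N}$, in which the set of even numbers and the set of odd numbers are disjoint cozero elements (cozeros of functions vanishing at infinity) neither of which is closed in $\beta\mathbb{N}$, hence neither is complemented. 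This is also what the paper's later proposition that every compact $P_F$-frame is finite would predict: if the present equivalence held, every $P_F$-frame would be finite. Your proposed alternative route does not repair this, since the asserted fact that $L$ is a $P$-frame iff $\beta L$ is fails for the same example.
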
                                                                                                                                                                                                                                                                                         
\begin{proof}                                                                                                                                                                                                                                                                                     
Assume that $L$ is a $P_F$-frame. Take $I, J\in Coz\beta L$ such that $$I\wedge J=0_{\beta L}.$$ Put                                                                                                                                                                                                
$a\equiv \bigvee I\in CozL$ and                                                                                                                                                                                                                                                    
$b\equiv\bigvee J\in CozL$.                                                                                                                                                                                                                                                                                     
Now, the map $\vartheta: \beta L\rightarrow L$ is a frame homomorphism, so                                                                                                                                                                                                                                 
\begin{center}{$a\wedge b=\bigvee I\wedge\bigvee J=\bigvee (I\wedge J)=\vartheta(I\wedge J)=0_L$.}                                                                                                                                                                                                        
\end{center}                                                                                                                                                                                                                                                                                      
Now, $L$ is a $P_F$-frame, so one of $a$ or $b$ is complemented, say $a$. That is, $a\vee a^{*}=1_L$. That is                                                                                                                                                                                      
\begin{center}{$1_{\beta L}=r_L(a\vee a^{*})=r_L(a)\vee r_L(a^{*})\leq r_L(a)\vee r_L(a)^{*}=I\vee I^{*}$.}                                                                                                                                                                                     
\end{center}                                                                                                                                                                                                                                                                                      
Thus $I$ is complemented in $\beta L$ and hence $\beta L$ is a $P_F$-frame.                                                                                                                                                                                                                       
\vspace*{0.1in}                                                                                                                                                                                                                                                                                   

Conversely, suppose that $\beta L$ is a $P_F$-frame. Take $a, b\in CozL$ such that $a\wedge b=0_L$. Put $a\equiv\bigvee I\in CozL$ and $b\equiv\bigvee J\in CozL$. Then by density of the join map it follows that $I\wedge J=0_{\beta L}$.                                                                                                                                                                                                                                                                                                                                                                                                                                                                                                                     
Now, $\beta L$ is a $P_F$-frame, so there exist $I^{*}\in Coz\beta L$ such that $I\vee I^{*}=1_{\beta L}$. Thus                                                                                                                                                                                    
\begin{center}{$\vartheta(I\vee I^{*})=\bigvee I\vee \bigvee I^{*}=(\bigvee I)\vee (\bigvee I)^{*}=a\vee a^{*}=1_L$.}                                                                                                                                                                                     
\end{center}                                                                                                                                                                                                                                                                                      
Hence $a$ is complemented and thus $L$ is a $P_F$-frame.                                                                                                                                                                                                                                          
\end{proof}                                                                                                                                                                                                                                                                                       

\section{Ring-theoretic characterizations of $P_F$-frames.}
In this section, we characterize $P_F$-frames in terms of the ring $\mathcal{R}L$. According to Osba \textit{et al} \cite{100}, a ring $R$ is \textit{von Neumann local} (henceforth abbreviated $VN$-local) if for each $a\in R$, at least one of $a$ or $1-a$ has a $VN$-inverse, that is, there is $b\in R$ such that at least one of $a$ or $1-a$, we have $aba=a$ or $(1-a)b(1-a)=(1-a)$. The following proposition is motivated by \cite[Lemma 4.5]{12}.                                                                                                                                                                                                                         
\begin{proposition} If $L$ is a $P_F$-frame, then $\mathcal{R}{L}$ is a $VN$-local ring.\label{qp}                                                                                                                                                                                                         
\end{proposition}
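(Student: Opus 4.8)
The plan is to take an arbitrary $\varphi\in\mathcal{R}L$ and produce the required $VN$-inverse for either $\varphi$ or $1-\varphi$. The natural starting point is the pair of cozero elements associated with $\varphi$ and $1-\varphi$. Set $a=\operatorname{coz}\varphi$ and $b=\operatorname{coz}(1-\varphi)$. These are cozero elements of $L$, and I first want to observe that $a\vee b=1_L$: indeed, using property (5) of the cozero map, $\operatorname{coz}\varphi\vee\operatorname{coz}(1-\varphi)=\operatorname{coz}(\varphi^2+(1-\varphi)^2)$, and $\varphi^2+(1-\varphi)^2$ is invertible in $\mathcal{R}L$ since it is bounded away from $0$ (it equals $2(\varphi-\tfrac12)^2+\tfrac12$), so its cozero is $1_L$ by property (3). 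Unfortunately $a$ and $b$ need not be disjoint, so the definition of a $P_F$-frame does not apply directly to them; the remedy is to pass to complements and consider $a^{*}$ and $b^{*}$. From $a\vee b=1_L$ one gets $a^{*}\wedge b^{*}=(a\vee b)^{*}=0_L$. Now $a^{*}$ and $b^{*}$ are regular elements but not obviously cozero, so this still is not quite in the form the definition wants.

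To get genuinely disjoint cozero elements, I would instead use normality of $\operatorname{Coz}L$ together with $a\vee b=1_L$: since $\operatorname{Coz}L$ is normal and $a,b\in\operatorname{Coz}L$ with $a\vee b=1$, there exist $c,d\in\operatorname{Coz}L$ with $c\wedge d=0$, $c\vee a=1$ and $d\vee b=1$. Apply the $P_F$-property to the disjoint cozero pair $c,d$: at least one is complemented, say $c$ (the other case being symmetric with the roles of $\varphi$ and $1-\varphi$ interchanged). From $c\vee a=1$ one gets $c^{*}\le a$, hence $c^{*}\le\operatorname{coz}\varphi$, and since $c$ is complemented, $c^{*}$ is itself a complemented cozero element. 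Thus $\operatorname{coz}\varphi$ contains a complemented element $c^{*}$ with $c^{*}\vee d=1$ (because $c^{*}\vee d\ge c^{*}\vee c=1$), and $d\le b=\operatorname{coz}(1-\varphi)$, which means $d\wedge\operatorname{coz}\varphi\le b\wedge a$; I would push this further to conclude that $\operatorname{coz}\varphi$ is actually complemented in $L$, with complement exactly the part of $b$ disjoint from $a$.

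The cleanest route, I think, is to argue that in a $P_F$-frame, for any $\varphi\in\mathcal{R}L$ either $\operatorname{coz}\varphi$ or $\operatorname{coz}(1-\varphi)$ is complemented. Granting that, say $\operatorname{coz}\varphi=e$ is complemented with complement $e^{*}$, I would invoke the standard fact (true in $\mathcal{R}L$ exactly as in $C(X)$) that an element of $\mathcal{R}L$ whose cozero is complemented has a $VN$-inverse: define $\psi$ on the open quotient $\downarrow\!e$ to be the inverse of the (now invertible, since its cozero is the top of $\downarrow\!e$) restriction of $\varphi$, and extend by $0$ across the complement $\downarrow\!e^{*}$; this is legitimate because $\mathcal{R}L\cong\mathcal{R}(\downarrow\!e)\times\mathcal{R}(\downarrow\!e^{*})$ when $e$ is complemented. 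The resulting $\psi$ satisfies $\varphi\psi\varphi=\varphi$. Hence $\varphi$ (or $1-\varphi$ in the other case) has a $VN$-inverse, so $\mathcal{R}L$ is $VN$-local.

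The main obstacle is the bridge between the frame-level $P_F$-condition — which speaks about \emph{disjoint} cozero elements — and the ring-level statement about $\varphi$ and $1-\varphi$, whose cozeros \emph{cover} $L$ rather than being disjoint. Getting past this requires the normality-of-$\operatorname{Coz}L$ shrinking argument above (exactly the device already used in the proof of Proposition \ref{aa1}), and then carefully checking that the complemented cozero element one extracts is a complement of $\operatorname{coz}\varphi$ itself and not merely of some smaller cozero element. The remaining step, manufacturing the $VN$-inverse from a complemented cozero, is routine once the product decomposition $\mathcal{R}L\cong\mathcal{R}(\downarrow\!e)\times\mathcal{R}(\downarrow\!e^{*})$ is in hand.
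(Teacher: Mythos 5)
Your overall architecture is right: $\mathcal{R}L$ is $VN$-local precisely when, for each $\varphi$, at least one of $coz\varphi$ or $coz(1-\varphi)$ is complemented; the identity $coz\varphi\vee coz(1-\varphi)=coz(\varphi^{2}+(1-\varphi)^{2})=1$ is correct; and the final step manufacturing a $VN$-inverse from a complemented cozero element $e$ via $\mathcal{R}L\cong\mathcal{R}(\downarrow\!e)\times\mathcal{R}(\downarrow\!e^{*})$ is routine. But the crux --- passing from the $P_F$-condition, which speaks only of \emph{disjoint} cozero elements, to the claim that of two cozero elements with join $1$ one must be complemented --- is exactly the point you leave open (``Granting that\ldots''), and the normality shrinking does not close it. From $c\wedge d=0$, $c\vee a=1=d\vee b$ and $c$ complemented you obtain only that $c^{*}$ is a complemented cozero element \emph{below} $a$; nothing forces $a$ itself (or $b$) to be complemented. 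Your parenthetical ``$c^{*}\vee d\ge c^{*}\vee c=1$'' is false as stated: since $c\wedge d=0$ one has $d\le c^{*}$, so $c^{*}\vee d=c^{*}$, which is not $1$ unless $c=0$. More to the point, in an essential $P$-frame with a single non-complemented cozero element $a_{0}$ you can have $a=a_{0}$ while the shrunken $c$ is complemented, so the complemented element you extract really is strictly smaller than $a$ and its existence decides nothing.

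The statement you actually need --- two cozero elements whose join is $1$ (indeed, whose join is complemented) cannot both fail to be complemented --- is the essential $P$-frame property, condition (4) of Proposition \ref{mmm}, and the fact that every $P_F$-frame is an essential $P$-frame is established only in the implication (1) $\Rightarrow$ (6) of Proposition \ref{jik}, by a separate contradiction argument that converts two arbitrary non-complemented cozero elements into two \emph{disjoint} ones. That argument (or an equivalent) is the missing idea; without it your proof assumes at its central step precisely what it must prove. For comparison, the paper's own proof is an equally elliptical disjoint-pair-plus-normality sketch that never exhibits the $VN$-inverse either; the honest route is the skeleton you set up, namely $P_F\Rightarrow$ essential $P$ (via Proposition \ref{jik}) $\Rightarrow$ one of $coz\varphi$, $coz(1-\varphi)$ is complemented $\Rightarrow$ a $VN$-inverse exists, with the first arrow carrying all the weight.
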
                                                                                                                                                                                                                                                                                 
\begin{proof} Let $a, b\in CozL$ and $a\wedge b=0$. Then $a\wedge b$ is complemented. There exists $c\in CozL$ such that $(a\wedge b)\vee c=(a\vee c)\wedge(b\vee                                                                                                                                 
c)=1$, then $a\vee c=1$ and $b\vee c=1$. The frame $L$ is a $P_F$-frame, say $a$ is complemented. Therefore, $\mathcal{R}{L}$ is a $VN$-local ring.                                                                                                                                                                     
\end{proof}

Essential $P$-frames were introduced by Dube \cite{12} and it captures the notion of essential $P$-spaces in a point-free setting. Recall that a space $X$ is an essential $P$-space if it has at most one point which fails to be a $P$-point (see \cite{7,100}). In this article, we call a frame $L$ to be an \textit{essential $P$-frame} if at most one $a\in CozL$ fails to be  complemented. We recall from \cite{lkm} that a proper ideal $I$ in a ring $R$ is said to be a \textit{semiprime} ideal if, whenever $J^{n}\subset I$ for an ideal $J$  of $R$ and some positive integer $n$, then $J\subset I$. Equivalently, $I$ is semiprime if $a^{2}\in I$  implies $a\in I$. This definition can be given in terms of $\mathcal{R}L$ where $R$ can be replaced with $\mathcal{R}L$. We also recall from \cite{12} that an ideal $I$ of $\mathcal{R}L$ is called a \textit{$z$-ideal} if for any $\alpha\in \mathcal{R}L$ and $\varphi\in I$, $coz\alpha=coz\varphi$ implies that $\alpha\in I$ (see \cite{kak}). The following proposition is a point-free version of \cite[Proposition 2.2]{81}. The next result is not about $P_F$-frames. However, it is related to $P_F$-frames as will be shown later.

\begin{proposition} The following statements are equivalent.\label{mmm}                                                                                                                                                                                                                           
\begin{enumerate}                                                                                                                                                                                                                                                                                 
\item [\textup{(1)}] A frame $L$ is an essential $P$-frame.                                                                                                                                                                                                                                                          
\item [\textup{(2)}] Of any two comaximal ideals of $\mathcal{R}(L)$, one is a $z$-ideal.                                                                                                                                                                                                                        
\item [\textup{(3)}] Of any two comaximal principal ideals of $\mathcal{R}(L)$, one is semiprime.                                                                                                                                                                                                                
\item [\textup{(4)}] For any $a, b\in CozL$ such that $a\vee b$ is complemented , then one of them is complemented.                                                                                                                                                                                              
\end{enumerate}                                                                                                                                                                                                                                                                                   
\end{proposition}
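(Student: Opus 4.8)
The plan is to run the cyclic chain $(1)\Rightarrow(2)\Rightarrow(3)\Rightarrow(4)\Rightarrow(1)$, after first establishing the workhorse lemma: \emph{for $\varphi\in\mathcal{R}L$, the principal ideal $(\varphi)$ is a $z$-ideal if and only if $coz\varphi$ is complemented in $L$ if and only if $(\varphi)$ is semiprime; and $(\varphi)+(\psi)=\mathcal{R}L$ if and only if $coz\varphi\vee coz\psi=1$.} The last clause uses $coz(\varphi^{2}+\psi^{2})=coz\varphi\vee coz\psi$ together with the fact that an element of $\mathcal{R}L$ is invertible exactly when its cozero is $1$. If $coz\varphi$ is complemented, complementedness splits $L$ as $\downarrow coz\varphi\times\downarrow(coz\varphi)^{*}$; the $\downarrow coz\varphi$-component of $\varphi$ has cozero equal to the top of $\downarrow coz\varphi$, hence is invertible, which yields an idempotent $e\in\mathcal{R}L$ with $coz e=coz\varphi$ and $(\varphi)=\{\alpha\mid coz\alpha\le coz\varphi\}$ --- visibly a $z$-ideal. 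A $z$-ideal is semiprime because $coz\alpha^{2}=coz\alpha$. Conversely, a semiprime ideal is radical, hence closed under taking $n$-th roots; if $\alpha$ is a cube root of $\varphi$ in the $f$-ring $\mathcal{R}L$, then $\alpha\in(\varphi)$, say $\alpha=\varphi\delta$, so $\alpha(1-\alpha^{2}\delta)=0$, giving $coz(1-\alpha^{2}\delta)\le(coz\alpha)^{*}=(coz\varphi)^{*}$ and therefore $coz\varphi\vee(coz\varphi)^{*}\ge coz(\alpha^{2}\delta)\vee coz(1-\alpha^{2}\delta)=1$.

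For $(1)\Rightarrow(2)$ let $I,J$ be comaximal ideals of $\mathcal{R}L$. If $L$ is a $P$-frame, then for any $\delta\in I$ the element $coz\delta$ is complemented, so $(\delta^{2})$ is a $z$-ideal by the lemma and $(\delta^{2})\subseteq I$, whence every $\beta$ with $coz\beta=coz\delta$ lies in $I$; thus every ideal of $\mathcal{R}L$ is a $z$-ideal and we are done. Otherwise $L$ has exactly one non-complemented cozero element $a_{0}$, and I argue by contradiction that not both of $I,J$ fail to be $z$-ideals: a non-$z$-ideal must contain an element whose cozero is non-complemented (by the lemma, otherwise the witness of failure would already lie in the ideal), hence each of $I,J$ would contain an element with cozero $a_{0}$; picking $\xi\in I,\zeta\in J$ with $\xi+\zeta=1$, one of $coz\xi,coz\zeta$ is complemented (they cannot both equal $a_{0}$, since $a_{0}\vee a_{0}\neq 1$), say $coz\xi$, with associated idempotent $e\in I$ and $1-e=(1-e)\zeta\in J$; if $a_{0}\le coz e$ then every $\beta$ with $coz\beta\le a_{0}$ satisfies $\beta(1-e)=0$, so lies in $I$, contradicting that $I$ is not a $z$-ideal, and symmetrically $a_{0}\le(coz e)^{*}$ is impossible; hence $a_{0}=(a_{0}\wedge coz e)\vee(a_{0}\wedge(coz e)^{*})$ is a disjoint join of two cozero elements each strictly below $a_{0}$, so one of them is non-complemented, contradicting the uniqueness of $a_{0}$.

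The step $(2)\Rightarrow(3)$ is immediate (restrict to principal ideals and recall that a $z$-ideal is semiprime). For $(3)\Rightarrow(4)$: given $a,b\in CozL$ with $a\vee b$ complemented, put $e=(a\vee b)^{*}\in CozL$ and choose $\varphi,\psi,\eta\in\mathcal{R}L$ with cozeros $a,b,e$ respectively; then $coz\varphi\vee coz(\psi^{2}+\eta^{2})=a\vee b\vee e=1$, so $(\varphi)$ and $(\psi^{2}+\eta^{2})$ are comaximal and by $(3)$ one of them is semiprime. If $(\varphi)$ is, then $a$ is complemented by the lemma; if $(\psi^{2}+\eta^{2})$ is, then $b\vee e=coz(\psi^{2}+\eta^{2})$ is complemented, and since $b=(b\vee e)\wedge(a\vee b)$ is then a meet of complemented elements, $b$ is complemented.

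Finally $(4)\Rightarrow(1)$, by contraposition, is the step I expect to be the main obstacle. Suppose $L$ is not an essential $P$-frame, so there are non-complemented cozero elements $a\neq b$. The easy case is that some two non-complemented cozero elements already have complemented join --- they then violate $(4)$ outright (in particular this disposes of the case where $a\vee b$ itself is complemented). The real work is to produce such a pair in general: assuming say $a\not\le b$, complete regularity yields a cozero $d\prec\!\prec a$ with $d\not\le b$ and then a cozero $s$ with $d\wedge s=0$ and $s\vee a=1$; pushing this --- exploiting that cozero elements are countable joins of elements completely below them, and the disjoint-splitting trick used in $(1)\Rightarrow(2)$, possibly after passing to $\beta L$ (noting that $L$ is an essential $P$-frame precisely when $\beta L$ is) --- toward two non-complemented cozero elements with join $1$ is where I expect the argument to require the most care.
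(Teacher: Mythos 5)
Your workhorse lemma and the legs $(1)\Rightarrow(2)\Rightarrow(3)\Rightarrow(4)$ are sound, and are in fact considerably more careful than the paper's own treatment: the paper runs $(1)\Rightarrow(2)\Rightarrow(3)\Rightarrow(1)$ together with $(1)\Leftrightarrow(4)$, but its arguments for $(1)\Rightarrow(2)$, $(3)\Rightarrow(1)$ and $(4)\Rightarrow(1)$ are little more than gestures (its $(4)\Rightarrow(1)$, for instance, concludes ``$u=b$'' merely from $a^{*}\leq u$ and $a^{*}\leq b$, and its $(1)\Rightarrow(4)$ only treats $a\vee b=1$). The genuine gap in your proposal is exactly where you flagged it: $(4)\Rightarrow(1)$ is not proved, only a strategy is sketched, so the cycle does not close.

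Moreover, the gap cannot be closed, because with the definition of essential $P$-frame adopted in this paper (``at most one $a\in CozL$ fails to be complemented''), the implication $(4)\Rightarrow(1)$ is false. Let $X=D\cup\{\infty\}$ be the one-point compactification of an uncountable discrete space $D$ and let $L$ be its frame of open subsets. The cozero elements of $L$ are the countable subsets of $D$ together with the cofinite sets containing $\infty$, while the complemented elements among them are the finite subsets of $D$ and the cofinite sets containing $\infty$. Hence every countably infinite subset of $D$ is a non-complemented cozero element and $(1)$ fails badly. Yet $(4)$ holds: if $a\vee b$ is complemented and $\infty\in a\vee b$, then $\infty$ lies in $a$ or in $b$, and that one is cofinite, hence complemented; if $\infty\notin a\vee b$, then $a\vee b$ is a complemented subset of $D$, hence finite, so both $a$ and $b$ are complemented. (This $X$ is an essential $P$-space in the classical sense --- $\infty$ is its unique non-$P$-point --- which is why the spatial theorem of Azarpanah et al.\ survives; the frame-theoretic definition used here is simply not the right translation of ``at most one non-$P$-point.'') So your instinct that the last leg ``requires the most care'' understates the difficulty: no amount of care, passage to $\beta L$, or disjoint-splitting will manufacture two non-complemented cozero elements with complemented join in this example, and the proposition needs its definition of essential $P$-frame repaired (e.g.\ to Dube's: at most one point of $\beta L$ fails to be a $P$-point) before $(4)\Rightarrow(1)$ --- and likewise $(3)\Rightarrow(1)$ --- can be true.
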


\begin{proof}(1) $\Rightarrow$ (2): Suppose that $I$ and $J$ are ideals of $\mathcal{R}L$ such that $I+J=\mathcal{R}L$. We need to show that either $I$ is a                                                                                                                                  
$z$-ideal or $J$ is a $z$-ideal. Let $\tau\in \mathcal{R}L$ and $\varphi\in I$ such that $coz\tau=coz\varphi$. $L$ is an essential $P$-frame so there is a                                                                                                                                        
$c\in CozL$ such that $c\notin I$, but $r(c)\subseteq I$. Hence $c\ne coz\varphi=coz\tau$ does not belong to $coz\varphi$ and $coz\tau$. Since $c$ is the only cozero element which is not complemented, it follows that both $coz\varphi$ and $coz\tau$ are cozero complemented. Therefore, either $\varphi\leq\tau$ or $\tau\leq\varphi$. Then either $\varphi\in J$ or $\tau\in I$ and hence either $I$ or $J$ is a $z$-ideal.                                                                                                                                                     
\vspace*{0.1in}                                                                                                                                                                                                                                                                                   

(2) $\Rightarrow$ (3): Let $\langle \phi\rangle$ and $\langle\varphi \rangle$ be two comaximal principal ideals of $\mathcal{R}L$, one is a $z$-ideal. Say, $\langle \phi\rangle$  is a $z$-ideal and let $\alpha^{2}\in \langle \phi\rangle$. Then for any $\omega\in \mathcal{R}L$ such that $coz(\alpha^{2})=coz\omega$. By
$z$-ideal, we have $coz\alpha=coz\alpha\wedge coz\alpha=coz(\alpha^{2}).$ Hence $\alpha\in \langle \phi\rangle$.                                                                                                                                                                          
\vspace*{0.1in}                                                                                                                                                                                                                                                                                   

(3) $\Rightarrow$ (1): Let $I$ and $J$ be two comaximal ideals of $\mathcal{R}L$ such that $I=\langle \varphi\rangle$ and $J=\langle\delta\rangle$. Then there exist $\tau=\varphi^{n}+\delta^{m}$, for some $n, m\in \mathbb{N}$. We assume without loss of generality that $\tau\ne \varphi^{n}$ and $\tau\ne\delta^{m}$. That is, $\tau\notin I$ and $\tau\notin J$. Then $c=coz\tau\notin I$ and $c=coz\tau\notin J$. Now, $$c=coz\tau=coz(\varphi^{n}+\delta^{m})= coz|\varphi^{n}|\vee coz|\delta^{m}|.$$ So, we can write  $\tau=\delta(\varphi^{p}+\delta^{q})$; where $\delta=\varphi^{k}$ and $\delta=\delta^{s}$, so that $\delta\in I\cap J$ and  for some $p, q, k, s\in \mathbb{N}$. Hence $$r(c)=r(coz\tau)=r(coz(\delta(\varphi^{p}+\delta^{q})))=r(coz(\delta(\varphi^{p})))\vee r(coz(\delta(\delta^{q}))).$$ By hypothesis (one is semiprime), say $I$ is semiprime. Hence $c\in CozL$, $c\notin I$ but $r(c)\subseteq I$, showing that $L$ is an essential $P$-frame.                                                                                                                                                                                                                                                                   
\vspace*{0.1in}                                                                                                                                                                                                                                                                                  

(1) $\Leftrightarrow$ (4): Let $a, b\in CozL$ such that $a\vee b=1$. Then it is immediate by the definition of an essential $P$-frame that at least one is                                                                                                                                    
complemented because $L$ has at most one cozero element which is not complemented. Conversely, suppose that $a\vee b=1$ such that $a$ is complemented (by hypothesis), then $a\vee a^{*}=1$. Now, $a\vee b=1$ implies  that $a^{*}\leq b$. If $b$ is not complemented, then $b\vee b^{*}\ne1$. Suppose on contrary that $u$ is another cozero element such that $u\vee a=1$. Again by hypothesis, $a$ is complemented. Then $a\vee a^{*}=1$, and $u\vee a=1$ implies that $a^{*}\leq u$. Therefore, $u=b$. Thus $L$ has only one cozero element which is  not complemented. Hence $L$ is an essential $P$-frame.                                                                                                                                                                                                       
\end{proof}

\textbf{Remark :} (2) $\Leftrightarrow$ (3): Also hold from the fact that every $z$-ideal is semiprime. Hence for any two comaximal principal ideals, one being a $z$-ideal. Thus, the one that is a $z$-ideal is semiprime.                                                                                                                                                                                                  
\vspace*{0.1in}

We give a brief account of $P$-ideals and also provide the characterization of $P_F$-frames associated with $P$-ideals. $P$-ideals were firstly introduced and studied in $C(X)$ (topological space $X$ is said to be completely regular Hausdorff space) by Rudd \cite{SFG}. We recall from the same article that a nonzero ideal $I$ of $C(X)$ is called a $P$-ideal if every proper prime ideal of $I$ is maximal in $I$. We have the definition of $P$-ideal in terms of $\mathcal{R}L$ as follows, a nonzero ideal $I$ in $\mathcal{R}L$ is said to be a \textit{$P$-ideal} if every proper prime ideal in $I$ is maximal in $I$. We show that an ideal $I$ in $\mathcal{R}L$ is  a $P$-ideal if and only if  for each $f\in I$, then $cozf$ is
complemented (see \cite[Theorem 1.5]{SFG}). The following proposition follows from the fact that $C(X)$ is a von Neumann regular ring if and only if all of its pure ideals are $P$-ideals (see \cite[Theorem 2.2 and Theorem 2.3]{ltd} and \cite{gcg}).                                                                                                                                                                                                                                       
\begin{proposition} An ideal $I$ is a $P$-ideal in $\mathcal{R}L$ if and only if for every $f \in I$, $cozf$ is complemented in $L$.
\end{proposition}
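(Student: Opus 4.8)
The plan is to prove the two implications separately, translating the definition of a $P$-ideal (every proper prime ideal of $I$ is maximal in $I$) into the cozero condition, and vice versa. The key algebraic fact I would rely on is that an ideal $I$ of $\mathcal{R}L$ in which every proper prime ideal is maximal is, as a ring without identity, a von Neumann regular object; concretely, for $I$ a $P$-ideal and $f\in I$, I want to produce $g\in I$ with $fgf=f$, which is exactly the assertion that $coz f$ is complemented. Conversely, if every $f\in I$ has $coz f$ complemented in $L$, then (as in the classical picture) $I$ behaves like a union of complemented ``clopen'' pieces, and any prime ideal of $I$ that is properly contained in another would contradict the idempotent-rich structure.

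First I would treat the forward direction. Assume $I$ is a $P$-ideal in $\mathcal{R}L$ and fix $f\in I$. Consider the ideal $\langle f\rangle \cap I = \langle f\rangle$ (since $f\in I$, $\langle f\rangle\subseteq I$) and look at the ring $I$ itself. Using the fact cited just before the statement — that $C(X)$ (equivalently $\mathcal{R}L$) is von Neumann regular iff all its pure ideals are $P$-ideals — together with Rudd's \cite[Theorem 1.5]{SFG}, I would argue that the $P$-ideal condition forces each principal subideal $\langle f\rangle$ to be generated by an idempotent-like element: there is $g\in I$ with $f = f^2 g$. Taking cozeroes, $coz f = coz f \wedge coz g \wedge coz f$, and more importantly $coz(f - f^2 g) = 0$ gives $f = f^2 g$ in $\mathcal{R}L$, whence $coz f$ is the cozero of an element possessing a ``weak inverse'' in $\mathcal{R}L$; a standard computation (the element $fg$ is idempotent with cozero $coz f$, so $(fg)\vee(1-fg)=1$ and $coz(fg)\wedge coz(1-fg)=0$) yields $coz f \vee (coz f)^{*} = 1$, i.e. $coz f$ is complemented.

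For the converse, suppose every $f\in I$ has $coz f$ complemented in $L$. Let $P$ be a proper prime ideal of $I$; I must show $P$ is maximal in $I$. Given $f\in I\setminus P$, complementedness of $coz f$ yields (cozero elements being closed under the relevant operations, and $(coz f)^{*}$ being a cozero element since it is complemented) an element $e\in\mathcal{R}L$ with $coz e = coz f$ and an element $h$ with $eh = e$, $coz(1-eh)=(coz f)^{*}$; pushing this inside $I$ gives $f g f = f$ for some $g\in I$, so $fg$ is an idempotent of $I$ lying outside $P$ with $f(1-fg)=0\in P$. Then for any $k\in I$, writing $k = fgk + (1-fg)k$ and noting $(1-fg)k$ is killed multiplicatively by $f$ modulo $P$, one shows $P + \langle f\rangle_I = I$, so $P$ is maximal in $I$. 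Hence $I$ is a $P$-ideal.

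The main obstacle I anticipate is the first direction: extracting from the abstract ``primes are maximal in $I$'' condition a concrete von Neumann inverse $g$ for each $f\in I$ \emph{inside $I$}, since $I$ is a ring without identity and the localization/prime-avoidance arguments available for $C(X)$ must be reproduced at the level of $\mathcal{R}L$ and its cozero lattice. I expect the cleanest route is to invoke \cite[Theorem 1.5]{SFG} and \cite[Theorem 2.2 and Theorem 2.3]{ltd} essentially verbatim — the point-free ring $\mathcal{R}L$ shares the ring-theoretic properties of $C(X)$ used there — and then perform the single translation step $f = f^2 g \ \Longleftrightarrow\ coz f$ complemented, which is where all the frame-theoretic content (pseudocomplements, cozero elements being complemented implying they are cozeroes of idempotents) actually enters.
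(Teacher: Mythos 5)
Your overall strategy --- translating both the $P$-ideal condition and complementedness of $coz f$ into the single statement ``$f=f^{2}g$ for some $g$'' --- is the right one, and your converse is in substance more careful than the paper's own (which, as printed, drifts into showing that $I$ is \emph{prime} rather than that primes of $I$ are maximal, and treats ring elements as frame elements along the way). Your computations are correct where you give them: from $f=f^{2}g$ one gets $f(1-fg)=0$ and $fg+(1-fg)=1$, hence $coz f\wedge coz(1-fg)=0$ and $coz f\vee coz(1-fg)=1$, so $coz f$ is complemented; and in the converse, with $e=fg$ idempotent, the decomposition $k=ek+(1-e)k$ together with $f(1-e)=0$ and primality of $P$ does show that any prime ideal of $I$ missing $f$ becomes all of $I$ once $f$ is adjoined. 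You should, however, isolate and justify the point-free lemma that $coz f$ complemented in $L$ implies $f=f^{2}g$ for some $g\in\mathcal{R}L$; it is true and citable, but it is exactly where the frame theory enters and you currently only gesture at it.

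The genuine gap is in the forward direction. The implication ``every proper prime ideal of $I$ is maximal in $I$ $\Rightarrow$ every $f\in I$ has a von Neumann inverse'' is the entire content of Rudd's Theorem 1.5, and you propose to invoke that theorem ``essentially verbatim''; but Rudd's theorem is the $C(X)$ instance of the very proposition being proved, so citing it for $\mathcal{R}L$ without reconstructing its proof leaves the forward direction unestablished. Classically that proof takes a point in the boundary of $Z(f)$ and manufactures a proper prime of $I$ that is not maximal; the point-free replacement (working with prime elements or points of $\beta L$, or with chains of prime $z$-ideals) is precisely the nontrivial work, and you explicitly flag it as ``the main obstacle'' without overcoming it. To be fair, the paper's own proof has the same defect in sharper form: it simply asserts the existence of $g\in I$ with $fg=0$ and $g=1$ on $\mathrm{supp}\,f$ --- two conditions that are incompatible as written unless $f=0$ --- and its converse does not address the definition of a $P$-ideal at all. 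So your proposal is a better skeleton than the printed argument, but neither constitutes a complete proof of the forward implication.
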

\begin{proof} We want to prove that an ideal $I$ of a distributive lattice $L$ is a $P$-ideal if and only if for every $f\in I$, $cozf$ is complemented. If $I$ is a $P$-ideal and $f\in I$, then there exists $g\in I$ such that $fg=0$ and $g=1$ on supp $f$. That is, $f\vee g=1$. Hence $0=coz(fg)=cozf\wedge cozg$ and $1=coz(f\vee g)=cozf\vee cozg$. This shows that $cozf$ is complemented.
\vspace*{0.1in}

Conversely, if for every $f \in I$, $coz(f)$ is complemented in $L$, and $f,g\in L$  such that $f \wedge g \in I$. We need to show that $f$ or $g$ belongs to $I$. Let $h = (cozf \vee cozg)^{*}$. Since $cozf$ and $cozg$ are complements in $L$, we have: $$cozf \wedge h = 0 \text{ and } cozg \wedge h = 0.$$
Thus, $cozf \leq h^{*}$ and $cozg \leq h^{*}$. It follows that $h^{*} \in I$ since $I$ is an ideal. We have:
$$h \wedge (f \wedge g) = (cozf \vee cozg)^{*} \wedge (f \wedge g) = ((cozf)^{*} \wedge (f \wedge g)) \wedge ((cozg)^{*} \wedge (f \wedge g)) = 0.$$
This shows that $h^{*}$ is an upper bound for $cozf$ and $cozg$. Since $cozf$ and $cozg$ are complements in $L$, it follows that $h$ is a lower bound for $f$ and $g$. Thus, we have:
$$f \vee g \leq h^{*} \in I$$
Therefore, $I$ is a $P$-ideal.
\end{proof}

From Proposition \ref{mmm}, we can say that we are now ready to give the characterization about $P_F$-frames, and the following proposition is an extension of \cite[Theorem 2.4]{81}.                                                                                                                                                                               

\begin{proposition} For a frame $L$, the following statements are equivalent.\label{jik}                                                                                                                                                                                                                     
\begin{enumerate}                                                                                                                                                                                                                                                                                 
\item [\textup{(1)}] $L$ is a $P_F$-frame.                                                                                                                                                                                                                                                                       
\item [\textup{(2)}] If $a, b\in CozL$ such that $a\wedge b$ is complemented, then at least one of them is complemented.                                                                                                                                                                                         
\item [\textup{(3)}] Of any two ideals of $\mathcal{R}{L}$ whose product is a $P$-ideal, at least one is a $P$-ideal.                                                                                                                                                                                           
\item [\textup{(4)}] Of any two principal ideals of $\mathcal{R}{L}$ whose product (intersection) is zero, at least one is semiprime.                                                                                                                                                                            
\item [\textup{(5)}] Of any two principal ideals of $\mathcal{R}{L}$ whose product (intersection) is semiprime, at least one is semiprime.                                                                                                                                                                       
\item [\textup{(6)}] $L$ is an essential $P$-frame which is also an $F$-frame.                                                                                                                                                                                                                                 
\end{enumerate}                                                                                                                                                                                                                                                                                   
\end{proposition}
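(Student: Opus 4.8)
The plan is to organise the six statements into three blocks and link them: the purely order-theoretic equivalence $(1)\Leftrightarrow(2)$; the equivalence $(1)\Leftrightarrow(6)$, which rests on Proposition~\ref{aa1} and on the definition of an essential $P$-frame; and the ring-theoretic group $(2)\Leftrightarrow(3)\Leftrightarrow(4)\Leftrightarrow(5)$, obtained by pushing everything through the cozero map, step for step parallel to the proof of Proposition~\ref{mmm}, with ``$\wedge$'' and ``product'' now playing the roles that ``$\vee$'' and ``sum'' played there, and ``complemented'' and ``$P$-ideal'' the roles of ``comaximal'' and ``$z$-ideal''.

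For $(1)\Leftrightarrow(2)$: the implication $(2)\Rightarrow(1)$ is immediate, since $a\wedge b=0$ is complemented (complement $1$), so $(2)$ applies. For $(1)\Rightarrow(2)$, take $a,b\in CozL$ with $a\wedge b$ complemented and set $d=(a\wedge b)^{*}$, so $(a\wedge b)\vee d=1$; being complemented, $d$ is a cozero element (as in the proof of Proposition~\ref{aa1}). Then $a\wedge d$ and $b\wedge d$ lie in $CozL$ with $(a\wedge d)\wedge(b\wedge d)=a\wedge b\wedge d=0$, so by $(1)$ one of them, say $a\wedge d$, is complemented. Since $a=a\wedge\bigl((a\wedge b)\vee d\bigr)=(a\wedge b)\vee(a\wedge d)$ is a join of two complemented elements, and the join of complemented elements $x,y$ is complemented (with complement $x^{*}\wedge y^{*}$), the element $a$ is complemented, which is $(2)$.

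For $(1)\Leftrightarrow(6)$: the direction ``$\Leftarrow$'' uses only the definition of an essential $P$-frame — if $a,b\in CozL$ with $a\wedge b=0$ were both non-complemented, they would coincide, being the unique non-complemented cozero element, and then $a=a\wedge b=0$ would be complemented, a contradiction; so $L$ is a $P_{F}$-frame (the $F$-frame hypothesis is not needed here). For ``$\Rightarrow$'', Proposition~\ref{aa1} delivers the $F$-frame property at once, and it remains to show that a $P_{F}$-frame has at most one non-complemented cozero element. I would argue by contradiction: if $a,b\in CozL$ are distinct and non-complemented, condition $(2)$ forces $a\wedge b$ to be non-complemented too, and then one invokes the $F$-frame structure — via the characterization used in the proof of Proposition~\ref{aa1} and the normality of $CozL$ — to extract from $a\ne b$ a disjoint pair of cozero elements, neither complemented, contradicting $(1)$.

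For the ring block, one pushes everything through $coz(\varphi\psi)=coz\varphi\wedge coz\psi$ and $\langle\varphi\rangle\langle\psi\rangle=\langle\varphi\psi\rangle$, using two translations: a principal ideal $\langle\varphi\rangle$ is semiprime exactly when $coz\varphi$ is complemented (if $coz\varphi$ is complemented, split off the idempotent $e=\varphi^{2}(\varphi^{2}+\theta^{2})^{-1}$ with $coz\theta=(coz\varphi)^{*}$ and check $\langle\varphi\rangle=\langle e\rangle$; conversely $|\varphi|^{1/2}\in\sqrt{\langle\varphi\rangle}$ yields such a splitting), and, by the $P$-ideal characterization proved just above, an ideal is a $P$-ideal exactly when every member has complemented cozero. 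With these, $(1)\Leftrightarrow(4)$ and $(2)\Leftrightarrow(5)$ are direct restatements, and $(2)\Rightarrow(3)$ follows since, were neither $I$ nor $J$ a $P$-ideal, one could pick $f\in I$, $g\in J$ with $cozf$, $cozg$ non-complemented while $coz(fg)=cozf\wedge cozg$ is complemented (because $IJ$ is a $P$-ideal), contradicting $(2)$; the reverse passage, and the implications among $(3)$, $(4)$, $(5)$, I would obtain by feeding suitable principal ideals — built from $coz\varphi$, $coz\psi$ and the idempotent carried by a complemented $coz\varphi\wedge coz\psi$ — into $(3)$, and by using the Remark after Proposition~\ref{mmm} that $z$-ideals, hence $P$-ideals, are semiprime. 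The step I expect to be the main obstacle is the essential-$P$ half of $(1)\Rightarrow(6)$: turning ``$a\ne b$'' into an honest disjoint non-complemented pair genuinely uses the $F$-frame structure and is the only point that is not routine bookkeeping along the cozero map, with the treatment of non-principal ideals in $(3)$ a secondary point requiring care.
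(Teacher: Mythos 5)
Your overall architecture is the same as the paper's: $(1)\Leftrightarrow(2)$ handled directly, the ring-theoretic chain pushed through the cozero map via the translations ``$\langle\varphi\rangle$ is semiprime iff $coz\varphi$ is complemented'' and ``$I$ is a $P$-ideal iff every member has complemented cozero'', and $(1)\Leftrightarrow(6)$ resting on Proposition~\ref{aa1} plus a contradiction argument for the essential-$P$ part. Two of your deviations are improvements rather than faults: your proof of $(1)\Rightarrow(2)$ --- meet $a$ and $b$ with the complemented (hence cozero) element $d=(a\wedge b)^{*}$, apply $(1)$ to the disjoint pair $a\wedge d$, $b\wedge d$, and recover $a=(a\wedge b)\vee(a\wedge d)$ as a join of complemented elements --- is cleaner than the paper's computation, which puts $c=a^{*}\vee b^{*}$ and tacitly uses the decomposition $b=(a^{*}\wedge b)\vee(a\wedge b)$; and you are right that, under the paper's stated definition of an essential $P$-frame (at most one non-complemented cozero element), the $F$-frame hypothesis does no work in $(6)\Rightarrow(1)$, even though the paper routes that implication through the $F$-frame property.

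The genuine gap sits exactly where you flagged it: the essential-$P$ half of $(1)\Rightarrow(6)$. You reduce to the case of distinct non-complemented $a,b\in CozL$ with $a\wedge b$ non-complemented, and then defer to an unspecified use of ``the $F$-frame structure'' to extract a \emph{disjoint} pair of non-complemented cozero elements. That extraction is the entire content of the step, and your proposal does not supply a candidate. The paper produces one explicitly: it sets $u=a^{*}\vee b^{*}$, checks $u\wedge(a\wedge b)=0$ by distributivity, and argues $u\vee u^{*}=(a^{*}\vee a^{**})\wedge(b^{*}\vee b^{**})\neq 1$, so that $u$ and $a\wedge b$ form the contradicting pair (the paper itself is thin on why $u$ is a cozero element, which is where $F'$-ness from Proposition~\ref{aa1} has to be invoked). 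Without some such construction your cycle through $(6)$ does not close. Two smaller points: the paper's $P$-ideals are by definition nonzero, so your $(3)\Rightarrow(4)$ via ``$\langle fg\rangle=\langle 0\rangle$ is a $P$-ideal'' needs the same caveat the paper silently makes; and your step from $(3)$/$(4)$ back to $(5)$ should be made concrete --- the paper's device is to multiply both generators by $t$ with $cozt=(coz(fg))^{*}$ so that $\langle tf\rangle\langle tg\rangle=\langle 0\rangle$, then use $cozt\vee cozf=1$ to transfer complementedness from $coz(tf)$ to $cozf$ --- which is recoverable from your sketch but not yet written down.
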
                                                                                                                                                                                                                                                                                 

\begin{proof}                                                                                                                                                                                                                                                                                     

(1) $\Rightarrow$ (2): Let $a, b \in CozL$ and $a\wedge b$ be complemented. If $a\wedge b=0$, then we are done. Now assume without loss of generality that                                                                                                                                        
$a\wedge b\ne0$. There exists $c\in CozL$ such that  $(a\wedge b)\vee c=(a\vee c)\wedge( b\vee c)=1$ implies $a\vee c=1$ and $b\vee c=1$. Now, if                                                                                                                                                  
$(a\wedge b)\wedge c=0$ then  $c\leq(a\wedge b)^{*}$. Put $c=a^{*}\vee b^{*}$. Now                                                                                                                                                                                                        \begin{equation*}
\begin{split}b^{*}\vee b & =b^{*}\vee[(a^{*}\wedge b)\vee(a\wedge b)]\\
& =[b^{*}\vee(a^{*}\wedge b)]\vee(a\wedge b)\\
& =[(b^{*}\vee a^{*})\wedge(b^{*}\vee b)]\vee(a\wedge b)\\
& =(b^{*}\vee a^{*})\vee(a\wedge b)\\
& =c\vee (a\wedge b)\\
& =(c\vee a)\wedge (c\vee b)\\
& =1.
\end{split}
\end{equation*}                                                                                                                                                                                                                                                                                      
 The fourth step holds because $a\wedge b\ne 0$. Hence $b$ is complemented as required.                                                                                                                                                                                                         
\vspace*{0.1in}                                                                                                                                                                                                                                                                                   

(2) $\Rightarrow$ (3): Let $I$ and $J$ be two ideals of $\mathcal{R}{L}$ whose product (intersection) is a $P$-ideal. Suppose $J$ is not a $P$-ideal, then there                                                                                                                                  
is $j\in J$ such that $coz(j)$ is not complemented. Since $ij\in IJ$ for each $i\in I$, $coz(ij)=cozi\wedge cozj$ is complemented ($IJ$ is a $P$-ideal). Now                                                                                                                                  
using (2), $cozi$  must be complemented for each $i\in I$, so $I$ is a $P$-ideal and we are done.                                                                                                                                                                                               
\vspace*{0.1in}                                                                                                                                                                                                                                                                                   

For intersection of two ideals $I$ and $J$ in $\mathcal{R}{L}$, where  $I\bigcap J$ is a $P$-ideal. Then we have $I\bigcap J=IJ$.                                                                                                                                                             
\vspace*{0.1in}                                                                                                                                                                                                                                                                                   

(3) $\Rightarrow$ (4): If the product or intersection of two principal ideals  $\langle f\rangle$ and $\langle g\rangle$ is zero, then $\langle f\rangle\langle                                                                                                                                   
g\rangle=\langle f\rangle\bigcap\langle g\rangle=\langle fg\rangle=\langle 0\rangle$ is a $P$-ideal. Now using (3), one of the principal ideals                                                                                                                                                   
$\langle f\rangle$  and $\langle g\rangle$ is a $P$-ideal whence it must be semiprime.

\vspace*{0.1in}                                                                                                                                                                                                                                                                                   
(4) $\Rightarrow$ (5): Let the product (intersection) of principal ideals $\langle f\rangle$ and $\langle g\rangle$ be semiprime. Then                                                                                                                                                          
$\langle f\rangle\langle g\rangle=\langle f\rangle\bigcap\langle g\rangle=\langle fg\rangle$ and $coz(fg)=cozf\wedge cozg$ is complemented. Take complemented                                                                                                                                 
element $cozt={coz(fg)}^{*}$ for some $t\in \mathcal{R}{L}$. Now $\langle tf\rangle\langle tg\rangle=\langle 0\rangle$ implies that either $\langle tf\rangle$                                                                                                                                  
or $\langle tg\rangle$ is semiprime (by (4)), say  $\langle tf\rangle$ is semiprime. This implies that $coz(tf)=cozt\wedge cozf$  is complemented. On the                                                                                                                                     
other hand $cozt\vee cozf=1$ implies that $cozf$ is complemented and hence the principal ideal $\langle f\rangle$ will be semiprime.                                                                                                                                                        

\vspace*{0.1in}                                                                                                                                                                                                                                                                                   
(5) $\Rightarrow$ (1): Let $cozf\wedge cozg=0$. Then $\langle f\rangle\bigcap\langle g\rangle=\langle 0\rangle$ is semiprime. Hence by (5), at least one of the
ideals $\langle f\rangle$ and $\langle g\rangle$ is semiprime. Hence either $cozf$ or $cozg$ is  complemented. Therefore $L$ is a $P_F$-frame.                                                                                                                                                
\vspace*{0.1in}                                                                                                                                                                                                                                                                                   

(1) $\Rightarrow$ (6): We only need to show that $L$ is an essential $P$-frame. Suppose on contrary that $a, b\in CozL$ which are not complemented with $a\ne b$. If
$a\wedge b=0$, then we are done. We assume without loss of generality that $a\wedge b\ne0$. If $a\wedge b$ is complemented, then at least one of them is complemented, a contradiction. If  $a\wedge b\in CozL$ is not complemented, then $(a\wedge b)\vee (a\wedge b)^{*}\ne1$. The frame $L$  is completely regular and hence $CozL$ generates $L$. There exists a cozero element $u\leq (a\wedge b)^{*}$. We
claim that $u=b^{*}\vee a^{*}$ is a non complemented cozero element, then $u^{*}=(b^{*}\vee a^{*})^{*}=b^{**}\wedge a^{**}$. Now                                                                                                                                                                  \begin{equation*}
\begin{split} u\vee u^{*} & =(b^{*}\vee a^{*})\vee (b^{**}\wedge a^{**})\\
& =(b^{*}\vee a^{*}\vee b^{**})\wedge(b^{*}\vee a^{*}\vee a^{**})\\
& =(b^{*}\vee b^{**})\wedge(a^{*}\vee a^{**})\\
& \ne 1.
\end{split}
\end{equation*}                                                                                                                                                                                                                                                                                      The third step holding because $a^{*}\leq b^{*}\vee b^{**}$ and $b^{*}\leq a^{*}\vee a^{**}$. Thus $u$ and $a\wedge b$ are disjoint cozero elements of $L$ which are not complemented. Hence a contradiction since $L$ is a $P_F$-frame. Hence $L$ has at most one cozero element which is not complemented and we are done.                                                                                                                                                                                                                                    
\vspace*{0.1in}                                                                                                                                                                                                                                                                                   

(6) $\Rightarrow$ (1): Suppose $a, b\in CozL$ such that $a\wedge b=0$. Since $L$ is an $F$-frame, there exist $c, d\in CozL$ such that $c\vee d=1$ and $c\wedge                                                                                                                                          
a=0=d\wedge b$. Now $c\wedge a=0$ implies $c\leq a^{*}$ and $d\wedge b=0$ implies $d=b^{*}$. Now $a\prec d$ and $b\prec c$ implies $a^{*}\vee d=1=b^{*}\vee c$.                                                                                                                                   
Then $a^{*}\vee b^{*}=1$. This shows that $a\wedge b$ is complemented. The frame $L$ is an essential $P$-frame, so at least one of them is complemented.

\end{proof}                                                                                                                                                                                                                                                                                       The following corollary follows immediately from \cite[Corollary 4.7]{12} and Proposition \ref{jik}.
\begin{cor} A normal frame $L$ is a $P_F$-frame if and only if $\mathcal{R}L$ is a $VN$-local ring.                                                                                                                                                                                                      
\end{cor}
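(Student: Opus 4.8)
The plan is to prove the two implications separately: the forward direction is essentially Proposition~\ref{qp}, while the reverse direction is obtained by feeding the normality hypothesis into \cite[Corollary 4.7]{12} and then invoking the equivalence of conditions (1) and (6) in Proposition~\ref{jik}. No lengthy computation is involved; the work is in organising which earlier result supplies which half.

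First I would dispose of the easy implication: if $L$ is a $P_F$-frame, then $\mathcal{R}L$ is a $VN$-local ring by Proposition~\ref{qp}, and here normality is not needed, so this half is a one-line citation.

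For the converse, assume $L$ is normal and $\mathcal{R}L$ is $VN$-local. The idea is to apply \cite[Corollary 4.7]{12}, which under the normality hypothesis ties $VN$-locality of $\mathcal{R}L$ to $L$ being an essential $P$-frame; this is exactly where normality is consumed. It then remains to see that $L$ is also an $F$-frame, so that condition (6) of Proposition~\ref{jik} is met and hence, by that proposition, condition (1) holds, i.e. $L$ is a $P_F$-frame. The $F$-frame check is short and I would carry it out by hand using the cozero description of $F$-frames recalled before Proposition~\ref{aa1}: given $a,b\in CozL$ with $a\wedge b=0$, the two elements cannot both fail to be complemented, since an essential $P$-frame has at most one such cozero element and $a\wedge b=0$ forces $a\ne b$ unless one of them is $0$ (and $0$ is complemented anyway); so, say, $a$ is complemented, hence $a^{*}\in CozL$, and taking $c=a^{*}$, $d=a$ gives $c\vee d=a^{*}\vee a=1$, $c\wedge a=0$ and $d\wedge b\le a\wedge b=0$, as required.

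The main obstacle is not a calculation but making the reverse direction airtight: one must pin down precisely what \cite[Corollary 4.7]{12} asserts under normality and be sure it delivers the essential $P$-frame property (rather than something weaker), since it is this property that feeds Proposition~\ref{jik}. If \cite[Corollary 4.7]{12} already records the full ``essential $P$-frame which is also an $F$-frame'' characterisation, then the corollary is literally immediate and the $F$-frame verification above is redundant; I would keep that short verification in only as a safeguard.
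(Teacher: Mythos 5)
Your proposal is correct and follows essentially the same route as the paper, which likewise obtains the corollary from \cite[Corollary 4.7]{12} together with Proposition \ref{jik} (with the forward direction available directly from Proposition \ref{qp}, exactly as you use it). Your explicit verification that an essential $P$-frame in the paper's sense already satisfies the $F$-frame condition is a sensible safeguard rather than a deviation, since the paper's ``follows immediately'' glosses over the fact that \cite[Corollary 4.7]{12} by itself only supplies the essential-$P$ half of condition (6) of Proposition \ref{jik}.
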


Although the following proposition and corollary are weaker than \cite[Proposition 5.1 and Corollary 5.2]{12}, they are worth noting. We recall from \cite{1cl} that a frame $L$ is said to be \textit{strongly zero-dimensional} if and only if $a\prec\!\prec b$ implies that there exists $c\in L$ such that $a\leq c\leq b$ for some complemented $c$.                                                                                                                                                                    

\begin{proposition} A $P_F$-frame is strongly zero-dimensional.                                                                                                                                                                                                                                           
\end{proposition}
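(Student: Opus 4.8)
The plan is to exploit the two interpolation-type facts about cozero elements recorded in the preliminaries, together with the defining property of a $P_F$-frame. So suppose $L$ is a $P_F$-frame and $a\prec\!\prec b$ in $L$; the goal is to produce a complemented $c\in L$ with $a\leq c\leq b$.

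First I would interpolate with a cozero element: since $a\prec\!\prec b$, there is a cozero element $t$ with $a\prec\!\prec t\prec\!\prec b$. Next, applying the second highlighted property of cozero parts to $t\prec\!\prec b$, I obtain a cozero element $s$ with $t\wedge s=0$ and $s\vee b=1$. Now $t$ and $s$ are disjoint cozero elements of $L$, so by the definition of a $P_F$-frame at least one of them is complemented, and I split into two cases accordingly.

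If $t$ is complemented, take $c=t$; since $a\prec\!\prec t$ and $t\prec\!\prec b$ give $a\leq t\leq b$, we are done. If instead $s$ is complemented, then $s^{*}$ is complemented (its complement being $s$, so also $s^{**}=s$). From $t\wedge s=0$ we get $t\leq s^{*}$, and from $s\vee b=1$ together with $s\wedge s^{*}=0$ a short distributivity computation gives $s^{*}=s^{*}\wedge(s\vee b)=(s^{*}\wedge s)\vee(s^{*}\wedge b)=s^{*}\wedge b\leq b$. Hence $a\leq t\leq s^{*}\leq b$, and $c=s^{*}$ works.

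The argument involves no serious obstacle; the only point requiring a little care is choosing the right cozero witnesses so that, in the case where the separating cozero element $s$ is the complemented one, its pseudocomplement is trapped between $a$ and $b$ — which is exactly what the two cozero-interpolation properties deliver. Note also that the definition of strong zero-dimensionality only asks $c$ to be complemented, not a cozero element, so we need not verify that $s^{*}$ lies in $CozL$.
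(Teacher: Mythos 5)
Your proof is correct and follows essentially the same route as the paper's: interpolate a cozero element between $a$ and $b$, produce a separating cozero element, invoke the $P_F$ property on the resulting disjoint pair, and when the separator $s$ is the complemented one, squeeze $s^{*}$ between $a$ and $b$. If anything, your version is slightly more careful, since you apply the $P_F$ definition to a genuinely disjoint pair of \emph{cozero} elements $(t,s)$, whereas the paper's wording separates $s$ from the (possibly non-cozero) element $a$ before concluding that $s$ must be complemented.
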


\begin{proof} Let $a\prec\!\prec b$ in a $P_F$-frame $L$. Take $c\in CozL$ such that $a\prec\!\prec c\prec\!\prec b$. If $c$ is complemented, then we are done. Suppose $c$ is not complemented. Take $s\in CozL$ such that $a\wedge s=0$ and $s\vee c=1$. Since $L$  is a $P_F$-frame  and $c$ is not complemented, thus $s$ is complemented. Hence $s^{*}$ is also complemented. Now $a\wedge s=0$ implies that $a\leq s^{*}$, and $s\vee c=1$ implies that $s^{*}\leq c$. Consequently, $a\leq s^{*}\leq b$. Thus $L$ is strongly zero-dimensional.
\end{proof}

\begin{cor} A $P_F$-space is strongly zero-dimensional.                                                                                                                                                                                                                                           
\end{cor}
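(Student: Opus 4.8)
The plan is to deduce this from the preceding proposition (``a $P_F$-frame is strongly zero-dimensional'') by passing to the frame of open sets. Let $X$ be a $P_F$-space and write $\mathcal{O}X$ for its topology, regarded as a frame. The first step is to record the standard dictionary that makes this transfer work: since $X$ is Tychonoff, the cozero elements of $\mathcal{O}X$ are exactly the cozero-sets of $X$, and an open set $U$ is complemented in $\mathcal{O}X$ precisely when it is clopen (the pseudocomplement of $U$ is the open set $X\setminus\overline{U}$, so $U\vee U^{*}=X$ amounts to $\overline{U}\subseteq U$). Hence the defining condition of a $P_F$-frame --- of any two disjoint cozero elements, at least one is complemented --- is literally the assertion that of any two disjoint cozero-sets of $X$ at least one is closed (equivalently clopen), which is one of the equivalent formulations of ``$X$ is a $P_F$-space'' recalled at the beginning of Section~3. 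Therefore $\mathcal{O}X$ is a $P_F$-frame.

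The second step is to apply the preceding proposition to $\mathcal{O}X$: it is strongly zero-dimensional as a frame, i.e. whenever $A\prec\!\prec B$ in $\mathcal{O}X$ there is a complemented element $C$ of $\mathcal{O}X$ with $A\leq C\leq B$.

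The third step is to translate this back into a statement about the space $X$. For open sets $A,B$ in a Tychonoff space, $A\prec\!\prec B$ in $\mathcal{O}X$ holds exactly when $A$ and $X\setminus B$ are completely separated, equivalently when there are disjoint zero-sets $Z_{1}\supseteq A$ and $Z_{2}\supseteq X\setminus B$. Combining this with the previous step, every pair of completely separated sets in $X$ (in particular every pair of disjoint zero-sets) is separated by a clopen set; this is precisely the condition $\dim X=0$, i.e. $X$ is strongly zero-dimensional, and the corollary follows.

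The computations here are all routine, and the only point that needs a little care is the dictionary between the frame notions ($\prec\!\prec$, complemented elements, cozero elements) and their classical counterparts (complete separation, clopen sets, cozero-sets) for a Tychonoff space, together with the observation that this dictionary identifies strong zero-dimensionality of $\mathcal{O}X$ with that of $X$. Alternatively one could simply repeat the argument of the preceding proposition verbatim inside $\mathcal{O}X$, but routing through the already-established frame statement is cleaner.
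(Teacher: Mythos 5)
Your proposal is correct and is essentially the paper's own (implicit) argument: the corollary is stated as an immediate consequence of the preceding proposition, obtained by regarding the topology $\mathcal{O}X$ of a $P_F$-space as a $P_F$-frame and using the standard dictionary between cozero/complemented/completely-below in $\mathcal{O}X$ and cozero-sets/clopen sets/complete separation in $X$. Your spelled-out version of that dictionary is accurate, so nothing further is needed.
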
                                                                                                                                                                                                                                                                                         

The following proposition is an extension of \cite[Proposition 2.5]{81}.
\begin{proposition} Every compact $P_F$-frame is finite. More generally, every pseudocompact $P_F$-frame is finite.                                                                                                                                                                                
\end{proposition}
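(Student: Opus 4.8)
The plan is to show that a pseudocompact $P_F$-frame cannot contain an infinite pairwise-disjoint family of nonzero cozero elements whose join is complemented, and then to produce such a family whenever the frame is infinite. First I would invoke Proposition \ref{jik} (the implication $(1)\Rightarrow(6)$) to conclude that a $P_F$-frame $L$ is an essential $P$-frame, so that at most one element of $CozL$ fails to be complemented; hence the Boolean algebra $\mathcal{B}(L)$ of complemented elements of $L$ contains all of $CozL$ with at most one exception. If $L$ were infinite, then $CozL$ would be infinite (complete regularity forces $CozL$ to join-generate $L$, so a finite $CozL$ would force $L$ finite), whence $\mathcal{B}(L)$ would be an infinite Boolean algebra, which therefore contains a countably infinite family $\{a_n\}_{n\in\mathbb{N}}$ of pairwise-disjoint nonzero elements; each $a_n$, being complemented, lies in $CozL$.

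Next I would arrange for the join of such a family to be complemented. Since $CozL$ is a sub-$\sigma$-frame, each of the countably many elements $d_k:=\bigvee_{n\ge k}a_n$ is a cozero element, and by the essential-$P$ condition at most one of them is non-complemented; so some $d_{k_0}$ is complemented, and after re-indexing I may assume $b:=\bigvee_n a_n$ is complemented while the $a_n$ remain pairwise disjoint, nonzero and infinitely many. Then $\{b^{*}\}\cup\{a_n:n\in\mathbb{N}\}$ is a countable cover of $L$ by cozero elements, because $b^{*}\vee b=1$. For any finite subfamily $\{b^{*},a_{n_1},\dots,a_{n_j}\}$ and any $m\notin\{n_1,\dots,n_j\}$ one has $a_m\le b$, so $a_m\wedge b^{*}=0$, and $a_m\wedge a_{n_i}=0$ for each $i$; hence $0\ne a_m\le (b^{*}\vee a_{n_1}\vee\cdots\vee a_{n_j})^{*}$, so no finite subfamily has dense join (and in particular no finite subfamily is a cover).

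Finally I would derive the contradiction. If $L$ is compact, the cover $\{b^{*}\}\cup\{a_n\}$ would have a finite subcover, contradicting the last sentence; and since every compact frame is pseudocompact, the first assertion reduces to the second. For the pseudocompact case I would use the (well-known) characterization that a frame is pseudocompact exactly when every countable cozero cover admits a finite subfamily with dense join — the cover constructed above violates this, so $L$ cannot be both pseudocompact and infinite. The main obstacle is the middle step: one must force the join of the chosen disjoint family to be \emph{complemented}, and this is precisely where the essential-$P$ property, combined with the fact that $CozL$ is a $\sigma$-frame, is used; an infinite pairwise-disjoint family of nonzero cozero elements with merely \emph{dense} join exists in many infinite pseudocompact frames, so the $P_F$-hypothesis is genuinely needed. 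If one prefers to avoid citing the pseudocompactness characterization, the same family yields a contradiction directly with $\mathcal{R}L=\mathcal{R}^{*}L$: pasting the constant value $n$ on each complemented $a_n$ and $0$ on $b^{*}$ defines an unbounded $\varphi\in\mathcal{R}L$, since for no bounded rational interval $(p,q)$ can $b^{*}\vee\bigvee\{a_n: n\in(p,q)\}$ equal $1$.
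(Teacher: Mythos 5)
Your proof is correct, but it follows a genuinely different route from the paper's. The paper argues by contradiction through heavy structural facts: an infinite compact $P_F$-frame, being an essential $P$-frame, would be the one-point compactification of a discrete frame (the text says ``indiscrete,'' presumably a slip), and an infinite compact $F$-frame would contain a copy of $\beta\mathbb{N}$ --- a contradiction; the pseudocompact case is then reduced to the compact one via $\beta L$ and Proposition 4.3. You instead give a direct, first-principles argument: essential $P$-ness makes the Boolean algebra of complemented elements infinite whenever $L$ is, an infinite Boolean algebra yields a countable disjoint family of nonzero complemented (hence cozero) elements, the $\sigma$-frame structure of $CozL$ plus essential $P$-ness lets you arrange the join $b=\bigvee a_n$ to be complemented (note the tails $d_k=\bigvee_{n\ge k}a_n$ are pairwise distinct since $a_k\wedge d_{k+1}=0$ and $a_k\ne 0$, so ``at most one non-complemented'' really does produce a complemented one --- worth stating explicitly), and then $\{b^*\}\cup\{a_n\}$ is a countable cozero cover with no finite subfamily of dense join, contradicting the Banaschewski--Gilmour characterization of pseudocompactness. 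What your approach buys is transparency and rigor: it avoids the unjustified appeals to one-point compactifications and embedded copies of $\beta\mathbb{N}$, handles the pseudocompact case directly rather than through $\beta L$, and every external input you use (infinite Boolean algebras contain infinite disjoint families; the countable-cozero-cover characterization of pseudocompactness, which is in the cited Banaschewski--Gilmour reference) is standard. What the paper's approach would buy, if fleshed out, is a structural description of compact essential $P$-frames of independent interest; as written, however, its key steps are asserted without proof, so your argument is the more complete of the two.
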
                                                                                                                                                                                                                                                                                 

\begin{proof} Let $L$ be a compact $P_F$-frame and suppose on contrary, that $L$ is infinite. Then $L$ is a one-point compactification of an indiscrete frame                                                                                                                                     
which implies that $L$ is an $F$-frame. On the other hand $L$ contains a copy of $\beta\mathbb{N}$, because by hypothesis                                                                                                                                   
is an infinite $F$-frame which is impossible. So $L$ is finite and we are done. Whenever $L$ is pseudocompact then $\beta{L}$ must be finite as a consequence of the first part of the proof.
\end{proof}

Recall from \cite{2,17} that a space $X$ is an \textit{almost P-space} if every dense $G_\delta$-set of $X$ has a non-empty interior. For the construction of the definition below, we first recall the definition of an almost $P$-frame. For the equivalence part, we recall  from \cite{661} that a point $I$ of $\beta{L}$ is an \textit{almost $P$-point} if for any $\alpha\in \mathbf{M}^{I}=\{\varphi\in \mathcal{R}L\mid r_L(coz\varphi)\subseteq I\}$, $coz\alpha$ is not dense. This motivates us to note the following definition in terms of ideals.                                                                                                                                                                                                     
\begin{defn} \textup{A frame $L$ is \textit{essential almost $P$-frame} if there is at most one cozero element which is not regular. Equivalently, if there is at most one $I\in \beta{L}$ such that $\alpha\in \mathbf{M}^{I}$, $coz\alpha$ is dense.}
\end{defn}                                                                                                                                                                                                                                                                                        It is clear that every essential $P$-frame is an essential almost $P$-frame. Next, we show that for a basically disconnected frame to be a $P_F$-frame it needs to be essential almost $P$-frame, and the following proposition is an extension of \cite[Proposition 2.7 (2)]{81}.
\begin{proposition} Every basically disconnected essential almost $P$-frame is a $P_F$-frame.\label{ad}                                                                                                                                                                                           
\end{proposition}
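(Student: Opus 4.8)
The plan is to verify the defining condition of a $P_F$-frame directly. So I would begin with $a, b \in CozL$ satisfying $a \wedge b = 0$ and aim to show that at least one of $a$, $b$ is complemented. The first move is to dispose of the degenerate possibility $a = b$: in that case $a = a \wedge b = 0$, and $0$ is trivially complemented since $0 \vee 0^* = 0 \vee 1 = 1$. Hence from now on I may assume $a \neq b$.

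With $a$ and $b$ distinct cozero elements, I would invoke the hypothesis that $L$ is an essential almost $P$-frame: there is at most one cozero element of $L$ that fails to be regular. Since $a$ and $b$ are distinct, they cannot both be non-regular, so at least one of them — say $a$, without loss of generality — satisfies $a = a^{**}$. Now I bring in basic disconnectedness: applied to the cozero element $a$ it gives $a^* \vee a^{**} = 1$, and substituting $a^{**} = a$ yields $a \vee a^* = 1$. Together with the identity $a \wedge a^* = 0$, this exhibits $a$ as a complemented element, so $L$ is a $P_F$-frame.

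I do not anticipate a genuine obstacle: the whole argument is a short chain of pseudocomplement manipulations once the two hypotheses are combined in the right order, and it parallels (while strengthening) Corollary~\ref{jkj}. The only point that calls for a little care is the bookkeeping about which of $a$, $b$ is regular — in particular treating the case $a = b$ separately, so that the ``at most one non-regular cozero element'' clause is applied to an honest pair of distinct cozero elements, and observing that for the conclusion it suffices to know $a \vee a^* = 1$, which is exactly what basic disconnectedness delivers for a regular cozero element. It is also worth remarking in passing that this reasoning in fact shows that a basically disconnected essential almost $P$-frame is already an essential $P$-frame (every regular cozero element becomes complemented, so at most one cozero element is non-complemented), which is consistent with the characterization of $P_F$-frames in Proposition~\ref{jik}(6).
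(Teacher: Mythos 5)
Your proof is correct and follows essentially the same route as the paper's: the essential almost $P$-frame hypothesis yields a regular element among the disjoint pair $a,b$, and basic disconnectedness ($a^*\vee a^{**}=1$ with $a^{**}=a$) upgrades regularity to complementedness. Your explicit handling of the degenerate case $a=b$ (where $a=0$ is trivially complemented) is a small point of extra care that the paper's proof leaves implicit.
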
                                                                                                                                                                                                                                                                                 
\begin{proof} Let $a, b\in CozL$ be such $a\wedge b=0$. The frame $L$ is basically disconnected, so \begin{center}{$a^{*}\vee a^{**}=1=b^{*}\vee b^{**}$.}
\end{center} Furthermore, $L$ is almost
$P$-frame, so at least of $a, b\in CozL$ is regular. Thus, at least one of $a$ and or $b$ is complemented. Hence $L$ is a $P_F$-frame.                                                                                                                                                             
\end{proof}                                                                                                                                                                                                                                                                                       
 The following corollary is a point-free version of \cite[Corollary 2.8]{81}, and it follows from Proposition \ref{a}, Proposition \ref{ad} and the fact that every basically disconnected frame is cozero complemented (see \cite{661}).                                                                                                                                                                                                                                                                           
\begin{cor} A frame is a  cozero complemented $P_F$-frame if and only if it is a basically disconnected essential almost $P$-frame.\label{php}                                                                                                                                                     
\end{cor}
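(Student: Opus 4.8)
The plan is to establish the biconditional by proving the two implications separately, each time simply assembling results already obtained in this paper together with the cited fact (from Matlabyana \cite{661}) that every basically disconnected frame is cozero complemented. No genuinely new argument is needed; the work is in the bookkeeping.

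For the forward implication I would let $L$ be a cozero complemented $P_F$-frame. Proposition \ref{a} gives at once that $L$ is basically disconnected, so it only remains to see that $L$ is an essential almost $P$-frame. Here I would invoke Proposition \ref{jik}: a $P_F$-frame is in particular an essential $P$-frame (statement (6) of that proposition). Since every complemented element $a$ satisfies $a=a^{**}$, a frame in which at most one cozero element fails to be complemented is a fortiori a frame in which at most one cozero element fails to be regular; that is, every essential $P$-frame is an essential almost $P$-frame, exactly the observation recorded just before Proposition \ref{ad}. Hence $L$ is a basically disconnected essential almost $P$-frame.

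For the converse I would let $L$ be a basically disconnected essential almost $P$-frame. Proposition \ref{ad} gives directly that $L$ is a $P_F$-frame, while the basic disconnectedness of $L$ together with Matlabyana's result \cite{661} gives that $L$ is cozero complemented. Combining these, $L$ is a cozero complemented $P_F$-frame, completing the proof.

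I do not expect a real obstacle here, since the substance is contained in the earlier propositions. The only point deserving a moment's care is the forward direction's passage from ``$P_F$-frame'' to ``essential almost $P$-frame'': rather than attempting to extract this directly from cozero complementedness plus basic disconnectedness, the clean route is through the essential $P$-frame characterization of Proposition \ref{jik} followed by the remark that every essential $P$-frame is an essential almost $P$-frame.
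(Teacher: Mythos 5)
Your proof is correct and follows essentially the same route as the paper, which derives the corollary from Proposition \ref{a}, Proposition \ref{ad}, and Matlabyana's result that basically disconnected frames are cozero complemented. You are in fact slightly more explicit than the paper in the forward direction, where you supply the missing step (via Proposition \ref{jik}(6) and the remark that every essential $P$-frame is an essential almost $P$-frame) showing that a $P_F$-frame is an essential almost $P$-frame.
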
                                                                                                                                                                                                                                                                                         

The following corollary which is an extension of \cite[Corollary 2.9]{81}, follows immediately from the fact that a frame $L$ is an $F$-frame if and only if each ideal of $\mathcal{R}L$ is a convex and, Proposition \ref{mmm}.
\begin{cor} A frame $L$ is a $P_F$-frame if and only if for any two comaximal principal ideals of $\mathcal{R}L$, one is semiprime and the other is convex.                                                                                                                                        
\end{cor}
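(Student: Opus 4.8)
The plan is to read the corollary off from Propositions~\ref{jik} and~\ref{mmm}, together with the quoted fact that a frame is an $F$-frame exactly when every ideal of $\mathcal{R}L$ is convex. First I would record the dictionary between the ideal-theoretic and the cozero-theoretic formulations: two \emph{principal} ideals $\langle f\rangle$ and $\langle g\rangle$ of $\mathcal{R}L$ are comaximal if and only if $coz f\vee coz g=1$ in $CozL$. Indeed, if $1=\alpha f+\beta g$ then $1=coz 1=coz(\alpha f+\beta g)\le coz f\vee coz g$ by properties~(1) and~(2) of the cozero map; conversely, if $coz f\vee coz g=1$ then $coz(f^{2}+g^{2})=coz f\vee coz g=1$ by property~(5), so $f^{2}+g^{2}$ is a unit by property~(3) and $1=(f^{2}+g^{2})^{-1}(f\cdot f+g\cdot g)\in\langle f\rangle+\langle g\rangle$. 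Hence the corollary is a statement about pairs $a,b\in CozL$ with $a\vee b=1$.

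For the forward implication, assume $L$ is a $P_F$-frame. By Proposition~\ref{jik}, the implication $(1)\Rightarrow(6)$, $L$ is an essential $P$-frame that is also an $F$-frame. Let $\langle f\rangle,\langle g\rangle$ be comaximal principal ideals of $\mathcal{R}L$. Since $L$ is an essential $P$-frame, Proposition~\ref{mmm}, the implication $(1)\Rightarrow(3)$, gives that one of $\langle f\rangle$ and $\langle g\rangle$ is semiprime; since $L$ is an $F$-frame, the quoted characterization gives that every ideal of $\mathcal{R}L$ is convex, so in particular the remaining one is convex.

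For the converse, suppose that of any two comaximal principal ideals of $\mathcal{R}L$ one is semiprime and the other is convex. Applying the ``semiprime'' half of the hypothesis to every comaximal principal pair and invoking Proposition~\ref{mmm}, the implication $(3)\Rightarrow(1)$, we obtain that $L$ is an essential $P$-frame. It then remains to see that $L$ is a $P_F$-frame, and here I would argue directly: since at most one element of $CozL$ fails to be complemented, two disjoint cozero elements of $L$ can never both be non-complemented, so for $a,b\in CozL$ with $a\wedge b=0$ at least one of $a,b$ is complemented, which is precisely the defining property of a $P_F$-frame. (Equivalently, one first notes that an essential $P$-frame is an $F$-frame --- given $a\wedge b=0$ with, say, $a$ complemented, the elements $c:=a^{*}\in CozL$ and $d:=a$ satisfy $c\vee d=1$ and $c\wedge a=0=d\wedge b$, so the $F$-frame criterion of \cite[Proposition~8.4.10]{15} holds --- and then concludes via Proposition~\ref{jik}, the implication $(6)\Rightarrow(1)$.)

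I do not expect a genuine computational obstacle here; the one point that warrants a second look is structural. The ``convex'' clause of the hypothesis is not actually used in the converse, being automatic once $L$ is an essential $P$-frame; it appears in the statement because the forward implication produces it, through the quoted characterization of $F$-frames, and thereby rounds out the equivalence. The rest is bookkeeping: matching each of the translations (essential $P$-frame $\leftrightarrow$ the semiprime clause via Proposition~\ref{mmm}, $F$-frame $\leftrightarrow$ convexity of all ideals via the quoted fact, and the two halves of Proposition~\ref{jik}) to the direction in which it is applied, and keeping the comaximality dictionary straight.
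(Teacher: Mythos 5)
Your proof is correct, and in outline it is the route the paper intends: the corollary is presented there as an immediate consequence of Proposition \ref{mmm}, of the equivalence (1) $\Leftrightarrow$ (6) in Proposition \ref{jik}, and of the fact that $F$-frames are exactly the frames in which every ideal of $\mathcal{R}L$ is convex. Where you genuinely differ is in the converse, and the difference is a necessary one. The paper's one-line derivation really only covers the forward implication: from the hypothesis that of any two comaximal principal ideals one is semiprime \emph{and the other is convex}, one cannot recover the statement that \emph{every} ideal of $\mathcal{R}L$ is convex, so the $F$-frame half of condition (6) of Proposition \ref{jik} is not directly available. You patch this correctly: the semiprime clause alone gives, via (3) $\Rightarrow$ (1) of Proposition \ref{mmm}, that $L$ is an essential $P$-frame in the paper's literal sense (at most one non-complemented cozero element), and from that definition $P_F$-ness follows at once, since two disjoint non-complemented cozero elements would have to coincide and hence equal $0$, which is complemented. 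Your remark that the convexity clause is therefore redundant in the converse is accurate; it is a by-product of the paper's strong definition of essential $P$-frame, under which the $F$-frame requirement in Proposition \ref{jik}(6) is itself automatic (as your parenthetical argument with $c=a^{*}$, $d=a$ shows). The comaximality dictionary $\langle f\rangle+\langle g\rangle=\mathcal{R}L$ iff $coz f\vee coz g=1$ is also correct, though strictly unnecessary here, since Proposition \ref{mmm}(3) is already phrased in terms of comaximal principal ideals.
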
                                                                                                                                                                                                                                                                                         
In this paper, we prefer to use the word $d$-ideal instead of $z^{\circ}$-ideal. We recall from \cite{kak,1322} that an ideal $I$ in $\mathcal{R}L$ is said to be a $d$-ideal if and only if for any $\alpha\in \mathcal{R}L$ and $f\in I$, we have $coz\alpha\leq(cozf)^{**}$ implies $\alpha\in I$. The following lemmas are point-free extension of \cite[Lemma 2.1 and Lemma 3.1, respectively]{81}.
\begin{lem} Let $L$ be an essential $P$-frame and $I$ be an ideal of $\mathcal{R}L$. Whenever $\bigvee_{f\in I}cozf$ is complemented for each $f\in I$, then $I$ is a $d$-ideal and hence a $z$-ideal. In particular every free ideal of $\mathcal{R}L$ is a $d$-ideal. \label{pin}
\end{lem}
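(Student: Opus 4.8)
The plan is to prove directly that $I$ is a $d$-ideal; it is then automatically a $z$-ideal, since $coz\alpha=coz\varphi$ forces $coz\alpha\le coz\varphi\le(coz\varphi)^{**}$, so the $d$-ideal condition applies. Likewise the ``in particular'' clause will follow at once: a free ideal satisfies $\bigvee_{f\in I}cozf=1$, which is complemented, so it falls under the hypothesis. Thus fix $\alpha\in\mathcal{R}L$ and $f\in I$ with $coz\alpha\le(cozf)^{**}$; the goal is $\alpha\in I$.

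The first step is to relocate everything below $e:=\bigvee_{f\in I}cozf$. Since $e$ is complemented we have $e=e^{**}$, and from $cozf\le e$ we get $(cozf)^{**}\le e^{**}=e$; hence $coz\alpha\le e=\bigvee_{g\in I}cozg$. This is the only use of the hypothesis on the join.

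The engine of the argument is a standard idempotent trick. Suppose $g\in I$ and $cozg$ is complemented. Then $(cozg)^{*}$ is complemented too, hence a cozero element, say $(cozg)^{*}=cozt$ with $t\in\mathcal{R}L$. Now $cozg\vee cozt=1$, so $g^{2}+t^{2}$ is invertible by property $(3)$ of the cozero map; putting $u=(g^{2}+t^{2})^{-1}$ and $\varepsilon=g^{2}u$ one checks $\varepsilon(1-\varepsilon)=g^{2}t^{2}u^{2}=0$, because $coz(g^{2}t^{2}u^{2})\le cozg\wedge(cozg)^{*}=0$ and $coz\psi=0$ iff $\psi=0$. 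So $\varepsilon$ is an idempotent with $coz\varepsilon=cozg$ and $\varepsilon=g\cdot(gu)\in\langle g\rangle\subseteq I$. Consequently $coz(1-\varepsilon)=(cozg)^{*}$, and for any $\beta\in\mathcal{R}L$ with $coz\beta\le cozg$ we get $coz\bigl(\beta(1-\varepsilon)\bigr)\le cozg\wedge(cozg)^{*}=0$, whence $\beta=\varepsilon\beta\in I$. This yields the \emph{key local fact}: if $g\in I$ has complemented cozero, then every $\beta$ with $coz\beta\le cozg$ lies in $I$.

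Finally bring in that $L$ is an essential $P$-frame, so every cozero element is complemented save possibly one, say $c_{0}$. If $cozf$ is complemented, then $(cozf)^{**}=cozf$, so $coz\alpha\le cozf$ and $\alpha\in I$ by the key local fact. If instead $cozf=c_{0}$, the first step still gives $coz\alpha\le e=\bigvee_{g\in I}cozg$, and one must extract $\alpha\in I$ from this; here the family $\{cozg:g\in I\}$ is upward directed (for $g_{1},g_{2}\in I$ one has $g_{1}^{2}+g_{2}^{2}\in I$ with cozero $cozg_{1}\vee cozg_{2}$ by property $(5)$), and all but one of its members are complemented. I expect this last reduction to be the main obstacle: one has to rule out that the single non-complemented cozero element $c_{0}$ lets $\alpha$ escape $I$, which is precisely where essential $P$-ness (at most one bad cozero element) must be combined with the complementedness of $\bigvee_{f\in I}cozf$, and where transferring the directed join to the compact frame $\beta L$ (so that it becomes a finite join, to which the key local fact can be applied) is the natural device to close the argument.
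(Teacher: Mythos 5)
Your ``key local fact'' (the idempotent $\varepsilon=g^{2}(g^{2}+t^{2})^{-1}$ with $coz\,\varepsilon=coz\,g$, giving $\beta=\varepsilon\beta\in I$ whenever $coz\,\beta\le coz\,g$ and $coz\,g$ is complemented) is correct and is exactly the engine the paper relies on (via its citation of Lemma 7.3.2 of Ighedo's thesis). But the proof is not complete: the one case that carries all the content of the lemma --- $f\in I$ with $coz f=c_{0}$, the unique non-complemented cozero element --- is the case you leave open, and the device you gesture at (pushing the directed join $\bigvee_{g\in I}coz\,g$ into $\beta L$ so that it ``becomes a finite join'') does not work; compactness of $\beta L$ only turns covers of the top into finite covers, and $r_L$ does not convert the directed join of the $coz\,g$ into anything finite. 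Moreover your reduction $coz\,\alpha\le e=\bigvee_{g\in I}coz\,g$ is not the useful consequence of the hypothesis.

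The step you are missing is a one-line perturbation inside $I$, which is what the paper does. Suppose $coz f=c_{0}$ is not complemented and $coz\,\alpha\le(coz f)^{**}$. Since $e=\bigvee_{g\in I}coz\,g$ is complemented while $c_{0}$ is not, and $c_{0}=coz f\le e$, not every $g\in I$ can satisfy $coz\,g\le c_{0}$ (otherwise $e=c_{0}$). Pick $g\in I$ with $coz\,g\not\le c_{0}$ and set $h=f^{2}+g^{2}\in I$. Then $coz\,h=c_{0}\vee coz\,g$ is a cozero element different from $c_{0}$, hence complemented because $L$ is an essential $P$-frame; and $coz\,\alpha\le(coz f)^{**}\le(coz\,h)^{**}=coz\,h$. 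Now your key local fact applied to $h$ gives $\alpha\in I$. This is precisely the paper's move of replacing $p$ by $p^{2}+f^{2}$ with $coz(p^{2}+f^{2})$ complemented; with it, your argument closes, and without it the lemma's essential case is unproved.
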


\begin{proof} In case $L$ is a $P$-frame, then every ideal of $\mathcal{R}L$ is a $z$-ideal by \cite[Proposition 3.9]{12}. Since every cozero element in $L$ is complemented, every ideal of $\mathcal{R}L$, where $L$ is a $P$-frame is a $d$-ideal. So, in case $L$ is $P$-frame, the proof is evident. Now assume a unique cozero element that fails to be complemented, say $c$. Let $\bigvee coz[I]$ be complemented for each $f\in I$. Then $c\ne \bigvee coz[I]$ and hence there exists $f\in I$ such that $c \neq cozf$. This means $cozf$ is complemented. Now suppose $cozg\leq (cozp)^{**}$  for $p\in I$ and $g\in\mathcal{R}L$. Clearly $cozg\leq (coz(p^{2}+f^{2}))^{**}$ and $coz(p^{2}+f^{2})$ is complemented and we have $cozg\leq coz(p^{2}+f^{2})$ (see \cite[Lemma 7.3.2]{kak}), $g$ is a multiple of $p^{2}+f^{2}$, that is, $g\in I$. Thus $I$ is a $d$-ideal.
\end{proof}

\begin{lem} If $L$ is a $P_F$-frame, then every ideal of $\mathcal{R}L$ which is not a $z$-ideal is contained in $\mathbf{M}_{a}=\{\varphi\in \mathcal{R}L\mid coz\varphi\leq a\}$ where $a\in CozL$ such that $a$ is not complemented. \label{pk}
\end{lem}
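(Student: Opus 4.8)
The plan is to use the structural fact, already available from Proposition \ref{jik}, that a $P_F$-frame is an essential $P$-frame, so there is at most one cozero element that fails to be complemented, and then to show that any ideal not closed under the $z$-ideal condition must ``live below'' that exceptional cozero element.

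First I would dispose of the trivial case: if $L$ is a $P$-frame, then every ideal of $\mathcal{R}L$ is a $z$-ideal (\cite[Proposition 3.9]{12}), so the statement is vacuous. Hence assume $L$ is not a $P$-frame; by Proposition \ref{jik}, $(1)\Leftrightarrow(6)$, $L$ is an essential $P$-frame, so there is a \emph{unique} $a\in CozL$ which is not complemented, and this is the $a$ named in the statement. Now let $I$ be an ideal of $\mathcal{R}L$ that is not a $z$-ideal, and fix witnesses $\varphi\in I$ and $\alpha\in\mathcal{R}L$ with $coz\alpha=coz\varphi$ but $\alpha\notin I$. It suffices to prove $coz\psi\le a$ for every $\psi\in I$, for this is exactly $I\subseteq\mathbf{M}_a$. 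Suppose, toward a contradiction, that some $\psi\in I$ has $coz\psi\not\le a$, and set $\delta=\psi^{2}+\varphi^{2}\in I$, so that $coz\delta=coz\psi\vee coz\varphi\ge coz\psi$; then $coz\delta\ne a$, hence by uniqueness of $a$ the cozero element $coz\delta$ is complemented, while $coz\alpha=coz\varphi\le coz\delta$.

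The heart of the argument, and the step I expect to be the main obstacle, is turning ``$coz\delta$ is complemented'' into the membership $\alpha\in I$. Writing $c=coz\delta$ with $c\vee c^{*}=1$, one has the frame splitting $L\cong{\downarrow}c\times{\downarrow}c^{*}$ and correspondingly $\mathcal{R}L\cong\mathcal{R}({\downarrow}c)\times\mathcal{R}({\downarrow}c^{*})$; under this, $\delta$ corresponds to $(\delta_{1},0)$ with $coz\delta_{1}=c=1_{{\downarrow}c}$, so $\delta_{1}$ is invertible and $\langle\delta\rangle=\mathcal{R}({\downarrow}c)\times 0$. Equivalently, $\delta$ is von Neumann regular in $\mathcal{R}L$ and there is an idempotent $e=\delta\sigma\in\langle\delta\rangle$ with $coz e=c$. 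Since $coz\alpha\le c=coz e$, the element $\alpha(1-e)$ has zero cozero, whence $\alpha=\alpha e\in\langle\delta\rangle\subseteq I$, contradicting $\alpha\notin I$. Therefore $coz\psi\le a$ for all $\psi\in I$, i.e. $I\subseteq\mathbf{M}_a$, as required. The only delicate points are the ring decomposition for a complemented $c$ and the passage from an invertible component to the idempotent generator of $\langle\delta\rangle$; both are standard, and the second can equivalently be recorded as the fact that a cozero element is complemented precisely when the corresponding element of $\mathcal{R}L$ is von Neumann regular.
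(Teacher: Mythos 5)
Your argument is correct, but it follows a genuinely different route from the paper's. The paper proves the lemma by passing through the prime ideal structure of $\mathcal{R}L$: by Lemma \ref{pin} a prime ideal that is not a $z$-ideal must be fixed, and for every point $b$ other than $a$ one has $\mathbf{M}_b=\mathbf{O}_b$, so the only primes that can fail to be $z$-ideals sit inside $\mathbf{M}_a$; the reduction from arbitrary ideals to their minimal primes (via the fact that an ideal is a $z$-ideal when all its minimal primes are) is left implicit. You instead argue element-wise: from witnesses $\varphi\in I$, $\alpha\notin I$ with $coz\alpha=coz\varphi$ and a hypothetical $\psi\in I$ with $coz\psi\not\leq a$, you form $\delta=\psi^{2}+\varphi^{2}\in I$, note $coz\delta\neq a$ is complemented by essentiality of the $P$-frame (Proposition \ref{jik}, $(1)\Rightarrow(6)$), and conclude $\alpha\in\langle\delta\rangle\subseteq I$ because an element with complemented cozero is von Neumann regular and divides everything whose cozero lies below its own. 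That last step is exactly the content of \cite[Lemma 7.3.2]{kak}, which the paper itself invokes in the proof of Lemma \ref{pin}; in effect you have inlined the mechanism of Lemma \ref{pin} rather than quoting its conclusion. What your version buys is a self-contained and more elementary proof that handles arbitrary (not just prime) ideals explicitly, and that does not need the claim $\mathbf{M}_b=\mathbf{O}_b$ for $b\neq a$; what the paper's version buys is that it sets up the fixed-versus-free and minimal-prime machinery that is reused immediately afterwards in Proposition \ref{plq} and Corollary \ref{geo}. Both proofs require the same external input (the regularity of elements with complemented cozero), so neither is logically lighter; yours is simply more explicit.
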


\begin{proof} By Lemma \ref{pin}, if a prime ideal of $\mathcal{R}L$ is not a $z$-ideal, then it must be fixed. For each $b\in L$ with $a\neq b$, we have $\mathbf{M}_b=\mathbf{O}_b$ and hence the only prime ideal in $\mathbf{M}_b$ is $\mathbf{M}_b$ itself which is a $z$-ideal. Hence the only candidates for prime ideals, which are not $z$-ideals, are those which are contained in $\mathbf{M}_a$.
\end{proof}

The frame $L$ is a $P$-frame if and only if every ideal of $\mathcal{R}L$ is a $z$-ideal (see \cite[Proposition 3.9]{12}). So in case $L$ is a $P$-frame, then $\mathcal{R}L$ has the property that for every two ideals whose sum (intersection) is a $z$-ideal, then both ideals are $z$-ideals. But if $L$ is even an essential $P$-frame, then $\mathcal{R}L$ may not have aforementioned property (see \cite{81}). The following proposition shows that if $L$ is an essential $P$-frame, then the sum (intersection) of two ideals  of $\mathcal{R}L$ may be a proper $z$-ideal but neither of which may be a $z$-ideal. The following proposition and corollary are point-free versions of Theorem 3.2 and Corollary 3.3 in \cite{81}.

\begin{proposition} The following statements are equivalent. \label{plq}
\begin{enumerate}
\item [\textup{(1)}] Of any two ideals in $\mathcal{R}L$ whose sum is a proper $z$-ideal, at least one is a $z$-ideal.
\item [\textup{(2)}] $L$ is a $P_F$-frame.
\item [\textup{(3)}] Of any two ideals in $\mathcal{R}L$ whose intersection is a $z$-ideal, at least one is a $z$-ideal.
\end{enumerate}
\end{proposition}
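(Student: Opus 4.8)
The strategy is to prove the implications $(2)\Rightarrow(1)$, $(2)\Rightarrow(3)$, $(1)\Rightarrow(2)$ and $(3)\Rightarrow(2)$, running the two ``$(2)\Rightarrow$'' directions in parallel and likewise the two converses. The standing tools are: $L$ is a $P$-frame exactly when every ideal of $\mathcal{R}L$ is a $z$-ideal (\cite[Proposition 3.9]{12}); $L$ is a $P_F$-frame exactly when it is an essential $P$-frame which is also an $F$-frame (Proposition \ref{jik}), so a $P_F$-frame that is \emph{not} a $P$-frame carries a unique cozero element $a$ failing to be complemented; Lemma \ref{pk}, which then pins every non-$z$-ideal of $\mathcal{R}L$ inside the proper $z$-ideal $\mathbf{M}_{a}=\{\varphi\in\mathcal{R}L\mid coz\varphi\le a\}$; and the fact that the $F$-frame property amounts to every ideal of $\mathcal{R}L$ being convex. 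I shall also use the $z$-closure $\widehat{I}$ of an ideal $I$ (the least $z$-ideal containing $I$, which exists since $z$-ideals are closed under intersection), together with the identity $\widehat{I\cap J}=\widehat{I}\cap\widehat{J}$.

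For $(2)\Rightarrow(1)$ and $(2)\Rightarrow(3)$: if $L$ is a $P$-frame the assertions are vacuous, so assume $L$ is $P_F$ but not $P$, with unique non-complemented cozero element $a$, and let $I,J$ be ideals \emph{neither} of which is a $z$-ideal. By Lemma \ref{pk}, $I,J\subseteq\mathbf{M}_{a}$, so $I+J$ and $I\cap J$ both lie inside $\mathbf{M}_{a}\subsetneq\mathcal{R}L$ and are therefore proper; the point is to see that they are not $z$-ideals. I would fix witnesses $\varphi\in I$ and $\alpha\notin I$ with $coz\alpha=coz\varphi$, and symmetrically $\psi\in J$, $\beta\notin J$ with $coz\beta=coz\psi$. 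For the intersection version, $I\cap J$ is a $z$-ideal iff $I\cap J=\widehat{I}\cap\widehat{J}$, and since $\widehat{I}\supsetneq I$ and $\widehat{J}\supsetneq J$, I would manufacture from $\varphi$ and $\psi$ (using that all relevant cozeroes sit below $a$) an element of $\widehat{I}\cap\widehat{J}$ that is not in $I\cap J$, contradicting the hypothesis. For the sum version, if $I+J$ were a $z$-ideal it would contain $\alpha$ (as it contains $\varphi$), so $\alpha=\iota+\jmath$ with $\iota\in I$, $\jmath\in J$, whence $\jmath\notin I$ and $\iota\notin J$; pitting this against the symmetric consequence of $\beta\in I+J$ and invoking convexity of all ideals of $\mathcal{R}L$ gives a contradiction.

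For $(3)\Rightarrow(2)$ and $(1)\Rightarrow(2)$, argue contrapositively: suppose $L$ is not a $P_F$-frame, so there are $a,b\in CozL$ with $a\wedge b=0$ and neither complemented, and pick $f,g\in\mathcal{R}L$ with $coz f=a$, $coz g=b$. The key sub-observation is that, since $a$ is not complemented, $\langle f\rangle$ is \emph{not} a $z$-ideal: the element $\sqrt{|f|}\in\mathcal{R}L$ lies in $\mathbf{M}_{a}$ and has $coz\sqrt{|f|}=coz f$, yet $\sqrt{|f|}=fh$ would force $|f|^{1/2}|h|$ to be the characteristic function of $a$, hence $\chi_{a}\in\mathcal{R}L$, i.e. $a$ complemented; similarly $\langle g\rangle$ is not a $z$-ideal. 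Now $\langle f\rangle\cap\langle g\rangle=\{0\}$, because $fg=0$ forces every element of the intersection to have cozero $\le a\wedge b=0$, and $\{0\}$ is a proper $z$-ideal; this already contradicts $(3)$. For $(1)$ one must instead produce two non-$z$-ideals whose \emph{sum} is a proper $z$-ideal, which is more delicate: here one works with $a$, $b$ and with suitable enlargements of $\langle f\rangle$, $\langle g\rangle$ inside $\mathbf{M}_{a}$, $\mathbf{M}_{b}$, arranged so that the sum absorbs into a single proper $z$-ideal (contained in $\mathbf{M}_{a\vee b}\subsetneq\mathcal{R}L$), contradicting $(1)$.

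The main obstacle is twofold and lies exactly at the spots flagged above: in the forward directions, the bookkeeping that turns the witnesses $\varphi,\psi,\alpha,\beta$ into a concrete violation of the $z$-property of $I+J$ or $I\cap J$, where Lemma \ref{pk} and $F$-frame convexity do the real work; and in the $(1)\Rightarrow(2)$ direction, the construction of a pair of non-$z$-ideals having a proper $z$-ideal as their sum. Everything else is routine ideal arithmetic, modelled on \cite[Theorem 3.2]{81}.
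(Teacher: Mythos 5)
Your proposal is not a complete proof: at each of the decisive steps it substitutes an intention for an argument, and those are exactly the points where the paper has to deploy real machinery. In the directions $(2)\Rightarrow(1)$ and $(2)\Rightarrow(3)$ you reduce to showing that if neither $I$ nor $J$ is a $z$-ideal then $I+J$ (resp.\ $I\cap J$) is not a $z$-ideal, but the execution is left as ``I would manufacture an element of $\widehat{I}\cap\widehat{J}$ not in $I\cap J$'' and ``pitting this against the symmetric consequence \ldots\ gives a contradiction.'' Neither of these is carried out, and neither is routine: your identity $\widehat{I\cap J}=\widehat{I}\cap\widehat{J}$ is asserted without proof (only $\subseteq$ is clear), and in the sum argument the inference ``$\iota\notin J$'' does not follow from what you have (you only know $\alpha\notin I$, not $\alpha\notin J$). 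The paper avoids all of this by passing to \emph{minimal prime ideals}: if $I$, $J$ are not $z$-ideals, \cite[Corollary 7.2.2]{kak} gives primes $P\supseteq I$, $Q\supseteq J$, minimal and not $z$-ideals; Lemma~\ref{pk} puts them inside $\mathbf{M}_a$; the $F$-frame property forces them into a chain via \cite[Corollary 3.8]{1}; and then $Q$ (resp.\ a suitable $I+\mathbf{O}_a$) is exhibited as a minimal prime over $I+J$ (resp.\ as equal to $P$), which must be a $z$-ideal --- the contradiction. Your witness-chasing scheme has no analogue of this chain argument, and convexity of ideals alone will not supply one.

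The converse directions are uneven. Your $(3)\Rightarrow(2)$ is fine and is in substance the paper's route through part (5) of Proposition~\ref{jik}: from disjoint non-complemented cozeros $a=coz f$, $b=coz g$ you get $\langle f\rangle\cap\langle g\rangle=\{0\}$, a proper $z$-ideal, while the $\sqrt{|f|}$ computation (better run as $|f|(1-|f|h^{2})=0$ together with $coz f\vee coz(1-|f|h^{2})=1$, rather than via a ``characteristic function'') shows neither principal ideal is a $z$-ideal. But $(1)\Rightarrow(2)$ is a genuine gap: ``suitable enlargements of $\langle f\rangle$, $\langle g\rangle$ \ldots\ arranged so that the sum absorbs into a single proper $z$-ideal'' names no construction, and there is no reason $\langle f\rangle+\langle g\rangle$ or any obvious enlargement should be a $z$-ideal. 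Moreover, contradicting (1) from a single pair of disjoint non-complemented cozeros is not how the paper proceeds: it proves $(1)\Rightarrow(2)$ in two separate stages, first establishing the $F$-frame property (non-comparable primes $P,Q$ in some $\mathbf{M}^{K}$ yield $I=\langle g\rangle+P$, $J=\langle f\rangle+Q$ with $I+J=P+Q$ a $z$-ideal by Mason's theorem \cite{pnm}, yet neither summand a $z$-ideal because $g^{1/3}\notin I$), and then the essential $P$-frame property (via $I=\langle f\rangle\mathbf{O}_{z}$, $J=\langle g\rangle\mathbf{O}_{z}$ with $I+J=\mathbf{O}_{z}$), before invoking Proposition~\ref{jik}(6). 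You would need to supply both constructions to close this direction.
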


\begin{proof} (1) $\Rightarrow$ (2): First we show that $L$ is an $F$-frame. Suppose $K$ for some $K\in \beta{L}$ is not prime. Then using \cite[Corollary 3.8]{1} there are two prime ideals ideals $P$ and $Q$ in $\mathbf{M}^K$  which are not in a chain. Hence there exist $f\in Q\backslash P$ and $g\in P\backslash Q$. If we take $I=\langle g\rangle+ P$ and $J=\langle f\rangle+ Q$, then $I+J=\langle f\rangle+Q+\langle g\rangle+P$. Now $P+Q$ is a $z$-ideal by \cite[Theorem 3.2]{pnm}, so $I+J$ is a $z$-ideal. But neither $I$ nor $J$ is a $z$-ideal because $coz(g^{\frac{1}{3}})=cozg$, $g\in I$ but $g^{\frac{1}{3}}\notin I$. In fact if $g^{\frac{1}{3}}\in I=\langle g\rangle +P$, then $g^{\frac{1}{3}}=gt+h$ for some $t\in \mathcal{R}L$ and $h\in P$. Therefore $g^{\frac{1}{3}}(1-tg^{\frac{2}{3}})\in P\subseteq \mathbf{M}^{K}$ implies that $(1-tg^{\frac{2}{3}})\in P\subseteq\mathbf{M}^{K}$ which is impossible  because $tg^{\frac{2}{3}}\in \mathbf{M}^{K}$. Therefore $L$ must be an $F$-frame.

\vspace*{0.1in}

To prove that $L$ is an essential $P$-frame, suppose on contrary that $a, b\in CozL$ which are not complemented. Let $z\in CozL$ be different  from $a$ and $b$  and take non-negative functions $f\in \mathbf{M}_{a}\backslash\mathbf{O}_{a}$ and $g\in \mathbf{M}_{b}\backslash\mathbf{O}_{b}$ such that $cozf\vee cozg=1$ and $z\nleq cozf\wedge cozg$. Consider the ideals $I=\langle f\rangle\mathbf{O}_{z}$ and $J=\langle g\rangle\mathbf{O}_{z}$. Clearly $I+J=\mathbf{O}_{z}$ (note that if $t\in \mathbf{O}_{z}$, then $t=\frac{f}{f+g}t+\frac{g}{f+g}t$) and it is a proper $z$-ideal. We show that $I$  and $J$ are not $z$-ideals which contradicts our hypothesis. To see this since $z\nleq cozf\wedge cozg$, there exists $h\in \mathcal{R}L$ such that $z\geq cozh$ and $cozh\vee(cozf\wedge cozg)=1$. Now $fh\in \langle f\rangle\mathbf{O}_{z}$, $coz(fh)=coz(f^{\frac{1}{3}}h)$, but $f^{\frac{1}{3}}h\notin \langle f\rangle\mathbf{O}_{z}$. In fact if $f^{\frac{1}{3}}h\in \langle f\rangle\mathbf{O}_{z}$, then \\ $f^{\frac{1}{3}}h=fq$ for some $q\in \mathbf{O}_{z}$ and hence $f^{\frac{1}{3}}(h-f^{\frac{2}{3}}q)=0$. The latter equality implies that $coz(f^{\frac{1}{3}})\wedge coz(h-f^{\frac{2}{3}}q)=0$ and $coz(f^{\frac{1}{3}})\vee coz(h-f^{\frac{2}{3}}q)=1$, because $cozh\vee cozf=1$. This implies that $cozf$ is complemented which contradicts $f\in \mathbf{M}_{a}\backslash\mathbf{O}_{a}$. Similarly we may also show that $J$ is not a $z$-ideal and we get the desired contradiction. Thus $L$ is an essential $P$-frame.

\vspace*{0.1in}

(2) $\Rightarrow$ (1): Let $I$ and $J$ be two ideals in $\mathcal{R}L$ and $I+J$ be a proper $z$-ideal. Suppose on contrary that $I$ and $J$ are not $z$-ideals. Then using  \cite[Corollary 7.2.2]{kak}  there are prime ideals $P$ and $Q$ with P minimal over $I$ and Q minimal over $J$  which are not $z$-ideals. So $P$ and $Q$ are contained in $\mathbf{M}_{a}$ by Lemma \ref{pk}, where $a\in CozL$ such that $a$ is not complemented. Since $L$ is an $F$-frame, $P$ and $Q$  are in a chain by \cite[Corollary 3.8]{1}, say $P\subseteq Q$. Accordingly $I+J\subseteq P+Q \subseteq Q$. On the other hand if $T$ is a prime ideal such that $I+J\subseteq T\subseteq Q$, then $J\subseteq T\subseteq Q$ implies $T=Q$, since $Q$ is minimal over $J$. This implies that $Q$ is a minimal prime ideal over $I+J$ which must be a $z$-ideal by \cite[Corollary 7.2.2]{kak}, a contradiction.

\vspace*{0.1in}

(1) $\Leftrightarrow$ (3): Since every semiprime principal ideal of $\mathcal{R}L$ is a $z$-ideal, using part (5) of Proposition \ref{jik}, (3) $\Rightarrow$ (2) and so it implies (1). For the converse, suppose that $I$ and $J$ are two ideals of $\mathcal{R}L$ such that $I\cap J$ is a $z$-ideal. Suppose on contrary that $I$ and $J$ are not $z$-ideals. Hence there are prime ideals $P$ and $Q$ minimum over $I$ and $J$, respectively, which are not $z$-ideals by \cite[Corollary 7.2.2]{kak}. Using Lemma \ref{pk}, $P$ and $Q$ are contained in $\mathbf{M}_{a}$, where $a\in CozL$ such that $a$ is not complemented. Therefore $P$ and $Q$ are in a chain by \cite[Corollary 3.8]{1}, say $P\subseteq Q$. Since $I\cap J+\mathbf{O}_{a}\subseteq P+\mathbf{O}_{a}\subseteq \mathbf{M}_{a}$, the sum $I\cap J+\mathbf{O}_{a}$ of two $z$-ideals is a proper $z$-ideal.  Accordingly to \cite[Lemma 4.3]{1} $I\cap J+\mathbf{O}_{a}$ is prime because it contains the prime ideal $\mathbf{O}_{a}$. On the other hand $I\cap J+\mathbf{O}_{a}=(I+\mathbf{O}_{a})\cap (J+\mathbf{O}_{a})$ is prime, so $I+\mathbf{O}_{a}$ and $J+\mathbf{O}_{a}$ are in  a chain, say $I+\mathbf{O}_{a}\subseteq J+\mathbf{O}_{a}$. Accordingly, $I\cap J+\mathbf{O}_{a}=I+\mathbf{O}_{a}$ that is a prime $z$-ideal. But $I\subseteq I+\mathbf{O}_{a}\subseteq P$ implies that $I+\mathbf{O}_{a}=P$ since $P$ is minimal over $I$ and this means that $P$ is a $z$-ideal, a contradiction. In case $J+\mathbf{O}_{a}\subseteq I+\mathbf{O}_{a}$, we get again a contradiction by a similar proof.
\end{proof}

Every $d$-ideal is a $z$-ideal, and in von Neumann regular rings $d$-ideals coincide with
$z$-ideals (see \cite{1322}).  The characterizations in Proposition \ref{plq} hold with “$z$-ideal” replaced by
“$d$-ideal”.

\begin{cor} The following statements are equivalent.\label{geo}
\begin{enumerate}
\item [\textup{(1)}] The sum of every two $d$-ideal in $\mathcal{R}L$ is a $d$-ideal or all of $\mathcal{R}L$ and of any two ideals in $\mathcal{R}L$ whose sum is a proper $d$-ideal, at least one is a $d$-ideal.
\item [\textup{(2)}] $L$ is a $P_F$-frame.
\item [\textup{(3)}] Of any two ideals in $\mathcal{R}L$ whose intersection is a $d$-ideal, at least one is a $d$-ideal.
\end{enumerate}
\end{cor}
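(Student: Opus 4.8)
The plan is to deduce Corollary~\ref{geo} from Proposition~\ref{plq} by running the three arguments there with ``$z$-ideal'' replaced by ``$d$-ideal'', using only that every $d$-ideal is a $z$-ideal together with the fact that, on $\mathcal{R}L$ for a $P_F$-frame $L$, the two notions coincide on all the ideals that occur in that proof. The implication $(3)\Rightarrow(2)$ is then immediate: $\langle 0\rangle$ is a $d$-ideal, so from $cozf\wedge cozg=0$, i.e.\ $\langle f\rangle\cap\langle g\rangle=\langle fg\rangle=\langle 0\rangle$, hypothesis $(3)$ gives that one of $\langle f\rangle,\langle g\rangle$ is a $d$-ideal, hence a $z$-ideal, hence semiprime (Remark after Proposition~\ref{mmm}); thus condition~$(4)$ of Proposition~\ref{jik} holds and $L$ is a $P_F$-frame. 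For the converse directions the facts one needs are: by Lemma~\ref{pin}, an ideal $I$ with $\bigvee_{f\in I}cozf$ complemented --- in particular every free ideal --- is a $d$-ideal; a semiprime principal ideal $\langle f\rangle$ is a $d$-ideal, since then $cozf$ is complemented and $\langle f\rangle=\mathbf{M}_{cozf}=\mathbf{O}_{cozf}$; each $\mathbf{O}_a$ with $a\in CozL$ is a $d$-ideal; and a prime ideal minimal over a $d$-ideal is a $d$-ideal.

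Granting these, $(2)\Rightarrow(1)$ and $(2)\Rightarrow(3)$ copy the minimal-prime arguments of Proposition~\ref{plq} essentially verbatim. Let $L$ be a $P_F$-frame and $I,J$ ideals whose sum (resp.\ intersection) is a proper $d$-ideal; if neither is a $d$-ideal then neither is a $z$-ideal, so by \cite[Corollary~7.2.2]{kak} there are primes $P,Q$ minimal over $I,J$ that are not $z$-ideals, hence not $d$-ideals. By Lemma~\ref{pk} both lie in $\mathbf{M}_a$ for the unique non-complemented $a\in CozL$, and since $L$ is an $F$-frame they lie in a chain \cite[Corollary~3.8]{1}, so exactly as in Proposition~\ref{plq} one of them turns out to be minimal over the sum (resp.\ intersection), hence a $d$-ideal --- a contradiction. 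The first clause of $(1)$, that a proper sum of two $d$-ideals is a $d$-ideal, is handled likewise: such a sum lies in $\mathbf{M}_a$ with its join of cozero elements complemented, so Lemma~\ref{pin} applies. For $(1)\Rightarrow(2)$ one copies the rest of Proposition~\ref{plq}: were $L$ not an $F$-frame, some $\mathbf{M}^K$ with $K\in\beta L$ would fail to be prime, producing incomparable primes $P,Q$ inside it and, for $f\in Q\setminus P$ and $g\in P\setminus Q$, ideals $I=\langle g\rangle+P=P$ and $J=\langle f\rangle+Q=Q$ that are not $z$-ideals --- hence not $d$-ideals --- while $P+Q$ is a proper $d$-ideal by the $d$-ideal form of \cite[Theorem~3.2]{pnm}, contradicting $(1)$; and the essential-$P$-frame half is obtained the same way using that $\mathbf{O}_z$ and $\mathbf{O}_a$ are $d$-ideals.

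The main obstacle, as this outline makes clear, is the bookkeeping that turns a ``$z$-ideal'' conclusion into a ``$d$-ideal'' one at every step: namely, establishing the $d$-ideal analogues of \cite[Corollary~7.2.2]{kak}, \cite[Theorem~3.2]{pnm} and \cite[Lemma~4.3]{1}, and confirming that each ideal manufactured in the proof of Proposition~\ref{plq} --- the minimal primes over $I$ and $J$, the sums $P+Q$, $\mathbf{O}_z$, $\mathbf{O}_a$, the intersection $I\cap J$ --- is actually a $d$-ideal. The unifying reason these go through is that, by Lemma~\ref{pk}, the only ideals of $\mathcal{R}L$ that can fail to be $d$-ideals are those not confined tightly enough below the unique non-complemented cozero element $a$; since $\mathcal{R}L$ is $VN$-local (Proposition~\ref{qp}) it is von Neumann regular away from $a$, where $z$-ideals and $d$-ideals coincide \cite{1322}, and Lemma~\ref{pin} identifies exactly which ideals lie on the good side. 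Making that reduction fully precise --- in particular, deciding whether $a$ must itself be regular --- is the one point requiring genuine argument rather than mechanical substitution.
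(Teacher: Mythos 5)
Your overall plan---rerun Proposition \ref{plq} with ``$z$-ideal'' replaced by ``$d$-ideal''---is the same as the paper's, and your treatment of $(3)\Rightarrow(2)$ is fine. But there is a genuine logical error in your arguments for $(2)\Rightarrow(1)$ and $(2)\Rightarrow(3)$: you write ``if neither is a $d$-ideal then neither is a $z$-ideal''. That is the \emph{converse} of the true implication (every $d$-ideal is a $z$-ideal) and it is false in general: in a $P_F$-frame that is not a $P$-frame, the ideal $\mathbf{M}_a$ attached to the non-complemented cozero element $a$ is always a $z$-ideal but is a $d$-ideal only when $a$ is regular---which is exactly the question you leave open in your last sentence. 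So you cannot reduce ``$I$ is not a $d$-ideal'' to the $z$-ideal case and then invoke \cite[Corollary 7.2.2]{kak} as stated. What is actually needed, and what the paper (tersely) uses, is the $d$-ideal form of that corollary: an ideal is a $d$-ideal iff every prime minimal over it is one. You list ``a prime ideal minimal over a $d$-ideal is a $d$-ideal'' among the facts you need, but that is the wrong direction for producing the non-$d$-ideal primes $P,Q$, and you explicitly defer establishing the $d$-ideal analogues as ``the main obstacle'' without resolving them; as written, the two implications out of $(2)$ are therefore not proved.

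A second concrete problem is your handling of the first clause of $(1)$ in the direction $(2)\Rightarrow(1)$. You assert that a proper sum of two $d$-ideals ``lies in $\mathbf{M}_a$ with its join of cozero elements complemented, so Lemma \ref{pin} applies''; but if the sum lies in $\mathbf{M}_a$ for the non-complemented $a$ there is no reason for the join of its cozero elements to be complemented (it could be $a$ itself), and if the sum is not contained in such an $\mathbf{M}_a$ you have said nothing at all. The paper obtains this clause differently: every $P_F$-frame is an $F$-frame, hence a quasi $F$-frame, and by \cite[Corollary 3.13]{jaz} or \cite[Theorem 5.1]{jeg} the quasi $F$-frames are exactly those $L$ for which the sum of any two $d$-ideals of $\mathcal{R}L$ is a $d$-ideal or all of $\mathcal{R}L$. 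That citation, not Lemma \ref{pin}, is the missing ingredient, and it is also what makes $I\cap J+\mathbf{O}_a$ a $d$-ideal in the $(1)\Leftrightarrow(3)$ argument.
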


\begin{proof} (1) $\Leftrightarrow$ (2): As in the first part of the proof of Proposition \ref{plq}, if $\mathbf{O}^{K}$ for some $K\in \beta{L}$ is not prime, then there are two prime ideals $P$ and $Q$ in $\mathbf{M}^{K}$ which are not in a chain. As in Proposition \ref{plq}, take $f\in P\backslash Q$, $g\in Q\backslash P$, $I=\langle g\rangle+P$ and $J=\langle f\rangle+Q$, then $$I+J=\langle f\rangle+Q+\langle g\rangle+ Q= P+Q.$$ Now using \cite[Corollary 3.13]{jaz} or \cite[Theorem 5.1]{jeg}, $P+Q$ is a prime $d$-ideal, because $L$ is a quasi $F$-frame (by our hypothesis, the sum of every two $d$-ideals in $\mathcal{R}L$ is a $d$-ideal or all of $\mathcal{R}L$). But as we observed in the proof of Proposition \ref{plq}, $I$ and $J$ are not even $z$-ideals. This implies that $L$ is an $F$-frame. The same proof of Proposition \ref{plq}  also shows that $L$ is an essential $P$-frame. Conversely, suppose that $I$ and $J$ are two ideals in $\mathcal{R}L$ whose sum is a $d$-ideal, but neither of which is a $d$-ideal.  As in the proof of (2) $\Rightarrow$ (1) of Proposition \ref{plq} and using \cite[Corollary 7.2.2]{kak}, there are prime ideals in $\mathbf{M}_{a}$ which are in a chain and not $d$-ideals. Using the same proof and the same corollary we get a contradiction.

\vspace*{0.1in}

(1) $\Leftrightarrow$ (3): Since every semiprime principal ideal of $\mathcal{R}L$ is a $d$-ideal, using part (5) of Proposition \ref{jik}, (3) $\Rightarrow$ (2) and so it implies (1). Conversely, suppose, on contrary that $I\cap J$ is a $d$-ideal, but neither $I$ nor $J$ is a $d$-ideal. Hence  there are prime ideals $P$ and $Q$ minimal over $I$ and $J$ respectively which are not $d$-ideals by \cite[Corollary 7.2.2]{kak}. Using Lemma \ref{pin} $P$ and $Q$ are contained in $\mathbf{M}_{a}$, where $a\in CozL$ is such that $a$ is not complemented. Similar proof of  Proposition \ref{plq}, $P$ and $Q$  are in chain, say $P\subseteq Q$. Since $L$ is a quasi $F$-frame, $I\cap J+\mathbf{O}_{a}$ is a proper $d$-ideal. On the other hand $I\cap J+\mathbf{O}_{a}$ is prime, since it contains the prime ideal $\mathbf{O}_{a}$. The rest of the proof is similar to that of (1) $\Rightarrow$ (3) of Proposition \ref{plq} and we show that $I+\mathbf{O}_{a}=P$ and $J+\mathbf{O}_{a}=Q$ are $d$-ideals which is a contradiction.
\end{proof}

We recall from \cite{twalathesis} that an ideal $I$ of $\mathcal{R}L$ is an \textit{$r$-ideal} if for each $\alpha\in \mathcal{R}L$ and $\varphi\in r(L)$ ($r(L)$ denotes a set of all regular elements of $\mathcal{R}L$), $\alpha\varphi\in I$ implies $\alpha\in I$. The authors called an ideal to be \textit{$z_r$-ideal} if it is both a $z$-ideal and $r$-ideal. They specified that $d$-ideals are $z_r$-ideals. Lastly, $z_r$-ideals lie between $d$-ideals and $z$-ideals. Hence, the following corollary follows immediately from Corollary \ref{geo} and Proposition \ref{plq}.

\begin{cor} The following statements are equivalent.
\begin{enumerate}
\item [\textup{(1)}] The sum of every two $z_r$-ideal in $\mathcal{R}L$ is a $z_r$-ideal or all of $\mathcal{R}L$ and of any two ideals in $\mathcal{R}L$ whose sum is a proper $z_r$-ideal, at least one is a $z_r$-ideal.
\item [\textup{(2)}] $L$ is a $P_F$-frame.
\item [\textup{(3)}] Of any two ideals in $\mathcal{R}L$ whose intersection is a $z_r$-ideal, at least one is a $z_r$-ideal.
\end{enumerate}
\end{cor}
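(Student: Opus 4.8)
The plan is to obtain the corollary from Proposition \ref{plq} and Corollary \ref{geo} by rerunning their proofs with ``$z$-ideal'' (respectively ``$d$-ideal'') replaced throughout by ``$z_r$-ideal''. What legitimises the transcription is the sandwich recalled just above: $d\text{-ideal}\Rightarrow z_r\text{-ideal}\Rightarrow z\text{-ideal}$. In particular every semiprime principal ideal of $\mathcal{R}L$ is a $d$-ideal, hence a $z_r$-ideal, and this is precisely what is needed to recycle the cheap implication: if $coz f\wedge coz g=0$ then $\langle f\rangle\cap\langle g\rangle=\langle 0\rangle$ is a $z_r$-ideal, so by $(3)$ one of $\langle f\rangle,\langle g\rangle$ is a $z_r$-ideal, therefore semiprime, so $coz f$ or $coz g$ is complemented and $L$ is a $P_F$-frame; this is $(3)\Rightarrow(2)$, via part (5) of Proposition \ref{jik}. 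The substance of the corollary is therefore the equivalence $(1)\Leftrightarrow(2)$, together with $(2)\Rightarrow(3)$.

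For $(2)\Rightarrow(1)$: a $P_F$-frame is an $F$-frame by Proposition \ref{aa1}, hence a quasi $F$-frame, and the first clause of $(1)$ --- the sum of two $z_r$-ideals is a $z_r$-ideal or all of $\mathcal{R}L$ --- is then the $z_r$-analogue of the quasi $F$-frame consequence invoked in Corollary \ref{geo} (the $d$-ideal form being \cite[Corollary 3.13]{jaz} / \cite[Theorem 5.1]{jeg}, the $z_r$-form available from \cite{twalathesis}). For the second clause, suppose $I+J$ is a proper $z_r$-ideal while neither $I$ nor $J$ is a $z_r$-ideal; take prime ideals $P$ minimal over $I$ and $Q$ minimal over $J$ that fail to be $z_r$-ideals (the $z_r$-version of \cite[Corollary 7.2.2]{kak}). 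The $z_r$-analogue of Lemma \ref{pk} --- valid by the same argument, since Lemma \ref{pin} shows a non-fixed ideal to be a $d$-ideal hence a $z_r$-ideal, while $\mathbf{M}_{b}$ is a $d$-ideal for every complemented $b\in CozL$ --- puts $P$ and $Q$ inside $\mathbf{M}_{a}$, where $a$ is the unique non-complemented cozero element given by the essential $P$-frame property. As $L$ is an $F$-frame, $P$ and $Q$ are comparable by \cite[Corollary 3.8]{1}, and the chain argument of Proposition \ref{plq} $(2)\Rightarrow(1)$ now produces a minimal prime over $I+J$ which, $I+J$ being a $z_r$-ideal, must itself be a $z_r$-ideal: the desired contradiction.

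For $(1)\Rightarrow(2)$ I would reproduce the two halves of the proof of Proposition \ref{plq} $(1)\Rightarrow(2)$. If $\mathbf{O}^{K}$ is not prime for some $K\in\beta L$, pick incomparable primes $P,Q\subseteq\mathbf{M}^{K}$, put $I=\langle g\rangle+P$ and $J=\langle f\rangle+Q$ with $f\in P\setminus Q$ and $g\in Q\setminus P$; then $I+J=P+Q$ is a prime $z_r$-ideal by the quasi $F$-frame consequence of clause $(1)$, whereas the cube-root witnesses $g^{1/3},f^{1/3}$ show $I$ and $J$ are not even $z$-ideals, hence not $z_r$-ideals, contradicting $(1)$; thus $\mathbf{O}^{K}$ is always prime and $L$ is an $F$-frame. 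The essential $P$-frame half is word for word the corresponding passage of Proposition \ref{plq}, after which part (6) of Proposition \ref{jik} gives that $L$ is a $P_F$-frame. Finally $(1)\Leftrightarrow(3)$ goes exactly as in Corollary \ref{geo}: $(3)\Rightarrow(2)\Rightarrow(1)$ as above, and conversely one runs the $\mathbf{O}_{a}$-chain argument of Proposition \ref{plq} $(1)\Rightarrow(3)$ to force $I+\mathbf{O}_{a}=P$ and $J+\mathbf{O}_{a}=Q$ to be $z_r$-ideals, a contradiction; this also yields $(2)\Rightarrow(3)$.

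The frame-theoretic input is already packaged in Propositions \ref{aa1} and \ref{jik}, Lemmas \ref{pin} and \ref{pk}, and \cite[Corollary 3.8]{1}, so the only real obstacle is confirming that the purely ring-theoretic facts used for $z$- and $d$-ideals have faithful $z_r$-ideal analogues: (i) minimal primes over a $z_r$-ideal are $z_r$-ideals and a non-$z_r$-ideal lies under a minimal prime that is not a $z_r$-ideal (the $z_r$-version of \cite[Corollary 7.2.2]{kak}), and (ii) ``the sum of two $z_r$-ideals is a $z_r$-ideal or all of $\mathcal{R}L$'' encodes the quasi $F$-frame condition. Both are available from \cite{twalathesis} together with the sandwich $d\text{-ideal}\Rightarrow z_r\text{-ideal}\Rightarrow z\text{-ideal}$; once they are granted, every step above is a transcription of a step already carried out for $z$- or $d$-ideals.
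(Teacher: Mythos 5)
Your proposal is correct and takes essentially the same route as the paper: the paper offers no separate proof for this corollary, asserting only that it ``follows immediately from Corollary \ref{geo} and Proposition \ref{plq}'' together with the fact that $z_r$-ideals lie between $d$-ideals and $z$-ideals. Your write-up is simply a detailed unpacking of that one-line derivation, and is in fact more careful than the paper in isolating the two $z_r$-analogues (the minimal-prime fact replacing \cite[Corollary 7.2.2]{kak} and the quasi-$F$-frame sum property) that must be imported from \cite{twalathesis} for the transcription to go through.
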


\textbf{Acknowledgments}: The second named author acknowledges the Department of Mathematics and Applied Mathematics for allowing him to pursue Master's studies at the University of Limpopo under the supervision of the first named and third named authors. This is the last chapter of the second author's Masters dissertation.\\

\textbf{Declaration of generative AI and AI-assisted technologies in the writing process:} During the preparation of this work the authors used chatGPT in order to fix grammar. After using this tool/service, the authors reviewed and edited the content as needed and takes full responsibility for the content of the publication.    




\end{document}